\newtheorem{theorem}{Theorem}[section]
\newtheorem{proposition}[theorem]{Proposition}
\newtheorem{lemma}[theorem]{Lemma}
\newtheorem{corollary}[theorem]{Corollary}
\theoremstyle{definition}
\newtheorem{definition}[theorem]{Definition}
\def\C{\mathbb C}
\def\Z{{\mathbb Z}}
\def\R{{\mathbb R}}
\def\P{{\bf P}}
\def\D{{\mathcal D }}
\def\Q{{\bf Q}}
\def\T{{\bf T}}
\def\I{{\mathcal I}}
\def\size{{\rm size}}
\def\diam{{\rm diam}}
\def\K{J}  
\def\pv{{\vec P}}
\def\Pv{{\vec{\bf P}}}
\def\dist{{\rm dist}}
\def\S{{S}}
\def\111{\gamma}
\def\be#1{\begin{equation}\label{#1}}
\def\bas{\begin{align*}}
\def\eas{\end{align*}}
\def\bi{\begin{itemize}}
\def\ei{\end{itemize}}
\def\emph#1{{\it #1}}
\begin{document}
\title{Bilinear Fourier restriction theorems}

\author{Ciprian Demeter}
\address{Department of Mathematics, Indiana Unversity, Bloomington, IN 47405}
\email{demeterc@indiana.edu}
\thanks{}

\author{S.\ Zubin Gautam}
\address{Department of Mathematics, Indiana Unversity, Bloomington, IN 47405}
\email{sgautam@indiana.edu}
\thanks{}

\subjclass[2010]{Primary: 42A45, Secondary: 42A50}

\keywords{Lacunary polygon, bilinear Fourier restriction.}

\date{}

\dedicatory{}

\begin{abstract}
We provide a general scheme for proving $L^p$ estimates for certain bilinear Fourier restrictions outside the locally $L^2$ setting. As an application, we show how such estimates follow for the lacunary polygon. In contrast with prior approaches, our argument avoids any use of the Rubio de Francia Littlewood--Paley inequality.
\end{abstract}

\maketitle

\section{Introduction}\label{introduction}
Given a domain $D$ in $\R \times \R$, one may consider the associated ``bilinear Fourier restriction'' operator $T_D$, defined \textit{a priori} on pairs of test functions $(f,g) \in \mathcal S(\R) \times \mathcal S(\R)$ and given by \[T_D\big(f,g\big)(x) = \int_{\R^2} \widehat f(\xi) \, \widehat g(\eta) \, \chi_D (\xi, \eta) \, e^{2\pi i x\,(\xi + \eta)} \, \mathrm d\xi \, \mathrm d\eta.\]  That is, $T_D$ is the bilinear Fourier multiplier operator with symbol $\chi_D$, the characteristic function of $D \subseteq \R \times \R$.  The case of $D = \R \times \R$ simply yields the pointwise product operator $(f,g) \mapsto fg$, and the well-known bilinear Hilbert transforms $H_s$, given by \[H_{s}(f,g)(x)= \int f(x+st) \, g(x-t)\,\frac{{\mathrm d}t}{t},\] are essentially bilinear Fourier restrictions to half-planes in $\R \times \R$, modulo linear combinations with the pointwise product operator.  The main topic of this paper concerns the boundedness properties of such bilinear F
 ourier restriction operators from $L^{p_1} (\R) \times L^{p_2}(\R)$ to $L^{p_3} (\R)$, for exponent triples $(p_1 \, , \, p_2 \, , \, p_3)$ satisfying the H\"older homogeneity condition $\frac{1}{p_1} + \frac{1}{p_2} + \frac{1}{p_3} = 1$.

The case of $D = \mathbb D$ the unit disc in $\R \times \R$ was studied in \cite{GL} by Grafakos and Li, who proved the boundedness of $T_{\mathbb D}$ in the ``locally $L^2$'' range of exponent triples satisfying $2 \leq p_1 \, , \, p_2 \, , \, p_3 \leq \infty$.  Such a nontrivial boundedness result of course contrasts markedly with the scenario for \emph{linear} Fourier restriction operators to domains in $\R^2$; indeed, by a celebrated result of C.~Fefferman, the characteristic function of the disc is well known to yield a bounded Fourier multiplier operator on $L^p(\R^2)$ only in the trivial case $p=2$.  In light of this contrast, at first glance one might speculate that in the bilinear setting the locally $L^2$ range of boundedness obtained in \cite{GL} may play a similar role to that of $L^2$ in the linear setting; indeed, to date no boundedness result for $T_{\mathbb D}$ has been obtained outside the locally $L^2$ range.

However, upon further examination such an analogy seems unlikely to obtain.  To wit, boundedness estimates for the bilinear Hilbert transforms $H_s$ \emph{uniform} in the parameter $s$ have been established outside the locally $L^2$ range (cf.~\cite{GL2}, \cite{Li2}).  By a classical argument invoking invariance of bilinear multiplier norms under dilations and translations of the symbol (cf.~\cite{GL}), boundedness of $T_{\mathbb D}$ immediately yields boundedness estimates for the bilinear Hilbert transforms $H_s$ uniformly in the parameter $s$.  Conversely, the core argument for the boundedness of $T_{\mathbb D}$ in \cite{GL} consists mainly of decomposing the disc multiplier in a suitable manner to facilitate the application of precisely the techniques used to prove uniform bounds for the $H_s$ in \cite{Thieleuniform}, \cite{GL2}, and \cite{Li2}.  There is as yet no argument that can deduce boundedness of $T_{\mathbb D}$ in a given exponent range by using uniform boundedne
 ss of the $H_s$ as a ``black box'' result; nonetheless, it seems likely that the exponent range for boundedness of the bilinear disc multiplier should be identical to that for uniform boundedness of the bilinear Hilbert transforms, and the obstacles to extending results for the disc beyond the locally $L^2$ range seem to be more technical than fundamental in nature.

The purpose of the current paper is to advocate for this point of view by illustrating in a simplified setting how one can circumvent one of the main such technical obstacles.  Namely, the key feature of the Grafakos--Li argument in \cite{GL} that limits its scope to the locally $L^2$ setting is the use of Rubio de Francia's Littlewood--Paley inequality to treat certain ``error terms'' arising from the decomposition of the disc multiplier therein; this method should be viewed as an exploitation of $L^2$ orthogonality.  In the current paper we follow the approach developed by Muscalu, Tao, and Thiele in \cite{MTT} for establishing uniform estimates on the $H_s$ (as opposed to the approach in \cite{Thieleuniform}, \cite{GL2}, and \cite{Li2}), and we exploit orthogonality in a manner that is not restricted to the locally $L^2$ setting.

Theorem \ref{main3} below establishes boundedness of the operator $T_D$ for a certain range of exponents outside the locally $L^2$ setting, with $D$ the ``lacunary polygon''; the lacunary polygon multiplier captures the most germane features of the disc multiplier in relation to uniform estimates for the bilinear Hilbert transforms.  In order to minimize technicalities and clearly illustrate the principles at work, we have not treated the actual disc multiplier $T_{\mathbb D}$ explicitly in this paper; however, with some additional technical effort but essentially the same ideas, it is likely that this approach could yield boundedness for $T_{\mathbb D}$ outside the locally $L^2$ range.  Theorem \ref{main2} illustrates the underlying mechanism behind this boundedness result, namely that an arbitrary sum of bilinear Hilbert transforms localized to have disjoint ``frequency supports'' can be bounded.  Both of these results will follow after we have established a certain ``model
  sum'' estimate, Theorem \ref{main1} below.  We obtain this latter estimate by following the Muscalu--Tao--Thiele approach of \cite{MTT}, with some minor modifications that allow their estimates to be extended to a limited range outside the locally $L^2$ setting.  This approach lends itself to our method of utilizing the orthogonality afforded by our disjoint-frequency-support conditions; in essence, the orthogonality allows us to aggregate the ``Bessel-type'' estimates associated to a family of bilinear Hilbert transforms into a single ``global'' Bessel-type estimate for their sum.  Our results are all stated for exponent ranges outside the locally $L^2$ setting; however, we note that our methods apply equally well in that setting.

As a final remark, we note that Oberlin and Thiele (\cite{OberlinThiele}) have obtained uniform estimates for the \emph{quartile operators}, which are Walsh model analogs of the bilinear Hilbert transforms, in the full range of expected exponents.  Via the ``global Bessel'' approach of this paper, their approach to uniform estimates can be used to prove bounds for a suitable Walsh analog of the disc and lacunary polygon multipliers in the full Banach range of exponents; one would expect that an adaptation of their methods to the Fourier setting (that is, the setting of the genuine bilinear Hilbert transforms) could similarly be used to obtain boundedness of the lacunary polygon and disc multipliers in the full expected range.

\section{The main theorems}
\label{uniform}

Let us first describe the boundedness result for the bilinear ``lacunary polygon'' multiplier, which should be viewed as a simplified model for the bilinear disc multiplier.  Let $P_{\operatorname{lac}}$ be the lacunary polygon inscribed in the unit disk; its vertices are the points $\big(\cos(\pi2^{-\mu}),\sin(\pi 2^{-\mu})\big)$, $\mu\ge 1$, and their symmetric images in the remaining three quadrants (see Figure \ref{lacpolyfigure}). A salient feature of this region is that the slopes of the edges of the polygon are bounded away from zero.

\begin{figure}
\begin{tikzpicture}[scale=.75]
\draw [<->] (-5,0) -- (5,0);
\draw [<->] (0,-5) -- (0,5);
\draw [help lines] (0,0) circle (4);
\draw [thick] (90:4) -- (45:4) -- (22.5:4) -- (11.25:4)--(5.625:4)--(2.8125:4)--(0:4);
\draw [help lines] (0,0) -- (45:4);
\draw [help lines] (0,0) -- (22.5:4);
\draw [help lines] (0,0) -- (11.25:4);
\draw [dotted, thick] (8:3.5) -- (3:3.5);
\draw [thick, cm={-1,0,0,1,(0,0)}](90:4) -- (45:4) -- (22.5:4) -- (11.25:4)--(5.625:4)--(2.8125:4)--(0:4);
\draw [thick, cm={1,0,0,-1,(0,0)}](90:4) -- (45:4) -- (22.5:4) -- (11.25:4)--(5.625:4)--(2.8125:4)--(0:4);
\draw [thick, cm={-1,0,0,-1,(0,0)}] (90:4) -- (45:4) -- (22.5:4) -- (11.25:4)--(5.625:4)--(2.8125:4)--(0:4);
\fill (45:4) circle(1.5pt);
\draw (45:4) node [font=\scriptsize, right] {$v_\mu$};
\fill (22.5:4) circle(1.5pt);
\draw (22.5:4) node [font=\scriptsize, right] {$v_{\mu+1}$};
\fill (11.25:4) circle(1.5pt);
\draw [help lines] (45:2) arc (45:22.5:2);
\draw (32.5:3) node [font=\tiny] {$\pi \cdot 2^{-(\mu+1)}$};
\end{tikzpicture}
\caption{The lacunary polygon $P_{\operatorname{lac}}$.} \label{lacpolyfigure}
\end{figure}

\begin{theorem}
\label{main3}
The bilinear operator $H_{\operatorname{lac}}$ given by
$$H_{\operatorname{lac}}\big(f,g\big)(x):=\int_{\R^2} \widehat{f}(\xi)\, \widehat{g}(\eta) \; \chi_{P_{\operatorname{lac}}} (\xi,\eta) \; e^{2\pi ix(\xi+\eta)} \, {\mathrm d}\xi \, {\mathrm d}\eta$$
maps $L^{p_1}\times L^{p_2}\to L^{p_3'}$
whenever $1<p_1<2<p_2,p_3<\infty$ and $$\frac1{p_1}+\frac1{p_2}+\frac1{p_3}=1.$$
\end{theorem}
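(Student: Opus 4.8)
The plan is to reduce Theorem \ref{main3} to the model sum estimate Theorem \ref{main1} (which we take as a black box) by decomposing the lacunary polygon multiplier $\chi_{P_{\operatorname{lac}}}$ into pieces adapted to the edges and vertices, and then recognizing each edge-piece as (essentially) a localized bilinear Hilbert transform with disjoint frequency supports, so that Theorem \ref{main2} applies. First I would exploit the symmetry of $P_{\operatorname{lac}}$ under the reflections through the coordinate axes to reduce to the portion of the polygon in a single angular sector, and indeed to the region lying below the chain of edges with vertices $v_\mu=(\cos(\pi 2^{-\mu}),\sin(\pi 2^{-\mu}))$. On this region, the key structural point emphasized in the introduction is that every edge of the polygon has slope bounded away from $0$ (and from $\infty$), so locally the boundary looks like a line $\xi+c\eta=\text{const}$ with $c$ comparable to a constant; after an affine change of variables the characteristic function of a half-plane with such a boundary is a bilinear Hilbert transform multiplier modulo the pointwise product operator, via the classical reduction recalled in the introduction.

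Next I would carry out a Whitney-type decomposition of the polygonal region into dyadic-like trapezoidal blocks $R_\mu$ associated to the successive angular scales $\pi 2^{-\mu}$, together with the ``corner'' contributions near the vertices $v_\mu$. Because consecutive edges live at geometrically separated angular scales, the blocks $R_\mu$ have frequency projections (onto the $\xi$-axis, the $\eta$-axis, and the anti-diagonal $\xi+\eta$) that are finitely overlapping across $\mu$: this is precisely the ``disjoint frequency support'' hypothesis needed to invoke the orthogonality mechanism of Theorem \ref{main2}. Within each block the multiplier agrees with a single half-plane multiplier $\chi_{\{\xi + c_\mu\eta \le b_\mu\}}$ restricted to $R_\mu$; writing $\chi_{R_\mu}\,\chi_{\text{half-plane}}$ as a superposition of the half-plane multiplier against smooth bump cutoffs adapted to $R_\mu$, each term becomes a bilinear Hilbert transform pre- and post-composed with Littlewood--Paley-type projections onto the relevant frequency intervals. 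Summing over $\mu$, the full operator $H_{\operatorname{lac}}$ is realized as an absolutely summable (after the reductions) combination of such frequency-localized bilinear Hilbert transforms together with pointwise-product pieces; the former is handled by Theorem \ref{main2} (or directly by the model sum bound of Theorem \ref{main1}), and the latter by H\"older's inequality, under the exponent constraints $1<p_1<2<p_2,p_3<\infty$, $\frac1{p_1}+\frac1{p_2}+\frac1{p_3}=1$.

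I expect the main obstacle to be the bookkeeping in the decomposition near the vertices and the verification that the error terms produced when one replaces the sharp cutoff $\chi_{R_\mu}$ by smooth bumps are genuinely controlled \emph{without} recourse to the Rubio de Francia Littlewood--Paley inequality --- this is exactly the point the paper advertises. Concretely, the smooth-cutoff replacement generates a family of ``tails'' supported on dilates of the $R_\mu$, and the naive estimate for their sum would invoke Rubio de Francia; instead one must arrange the decomposition so that these tails again have the disjoint-frequency-support structure and then feed them back into Theorem \ref{main2}, whose proof (via Theorem \ref{main1}) aggregates the per-edge Bessel-type inequalities into a single global Bessel estimate. A secondary technical point is controlling the dependence of the implied constants on the slopes $c_\mu$ and on the eccentricity of the trapezoids $R_\mu$; here one uses that the lacunary geometry keeps the slopes bounded away from $0$ and $\infty$ uniformly in $\mu$, so the bilinear Hilbert transform bounds enter with constants uniform in $\mu$, precisely as in the uniform estimates of \cite{MTT}.
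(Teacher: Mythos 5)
Your overall architecture --- decompose $P_{\operatorname{lac}}$ edge by edge into pieces whose frequency projections overlap boundedly in $\mu$, and feed the result into the global-Bessel model sum machinery --- does match the paper's strategy, but two steps in your outline are genuine gaps rather than bookkeeping. First, the polygon is not exhausted by thin blocks along the edges plus a central square. After removing from each triangle $0,v_\mu,v_{\mu+1}$ the trapezoid $T_\mu$ trapped between $(1-2^{-2\mu})P_{\operatorname{lac}}$ and $P_{\operatorname{lac}}$, what remains (beyond the square $S_0$) is a lacunary family of rectangles $R_\mu$ whose widths in one frequency variable scale like $2^{-2\mu}$ while their extents in the other scale like $2^{-\mu}$. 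The corresponding multiplier is a \emph{quadratically scaled paraproduct}, not a ``pointwise-product piece,'' and it cannot be dispatched by H\"older; the paper invokes Theorem 2.2 of \cite{Li} for it (proved via a telescoping identity, the square function, and the maximal martingale transform, as in the appendix). Your proposal omits this component entirely.

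Second, the ``error terms from replacing the sharp cutoff by smooth bumps,'' which you identify as the main obstacle and leave unresolved, do not arise in the paper's argument at all: rather than writing each edge piece as a sharp half-plane multiplier restricted to a block, one performs a Whitney decomposition of each $T_\mu$ relative to the edge $l_\mu$ (an affine image $R_{S,\mu}=v_\mu+L^2_\mu(S)$ of the standard Whitney squares relative to the diagonal) and uses Proposition \ref{partitionunity} to produce an \emph{exact} partition of unity $\chi_{P_{\operatorname{lac}}}=\psi_{S_0}+\sum_{R\in{\bf \tilde R}}\psi_R+\sum_{R\in{\bf \tilde R''}}\psi_R$ into bumps adapted to rectangles with bounded overlap. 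Each edge's Whitney family corresponds precisely to the frequency boxes in $\Q_\mu$, and after spatial discretization via \eqref{spatialdiscret} the sum over all $\mu$ is a single model sum handled directly by Theorem \ref{main1}; one never reduces to individual bilinear Hilbert transforms or to Theorem \ref{main2}, and there are no tails to recombine. Without the exact partition of unity and without a treatment of the paraproduct term, your argument does not close.
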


As alluded to in the introduction above, our second result will illustrate the actual principle at work behind Theorem \ref{main3}.  Let us begin by constructing a family of bilinear Hilbert transforms that are ``frequency localized with disjoint frequency supports.''  Consider the family of the bilinear Hilbert transforms defined as before by
$$H_{s}(f,g)(x)= \int f(x+st) \, g(x-t)\,\frac{{\mathrm d}t}{t}.$$
It has been proven in \cite{GL2} and \cite{Li2}  that $H_{s}$ maps $L^{p_1}\times L^{p_2}$ to $L^{p_3'}$ uniformly in the parameter $s$, if $\frac1{p_3}+\frac1{p_3'}=1$ and $\left(\frac{1}{p_1},\frac{1}{p_2},\frac{1}{p_3}\right)$ is in the convex hull of the triangles $c$, $a_2$, and $a_3$ in the ``type diagram'' of Figure \ref{typediagram}.

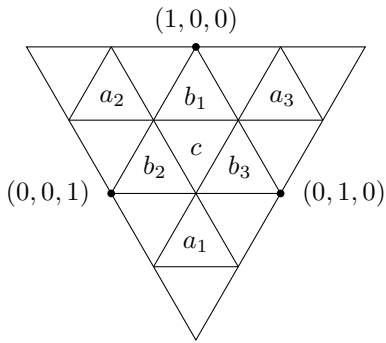
\begin{figure}
\begin{tikzpicture}[scale=1.95]
\draw (-1.1547,1) -- (1.1547,1) -- (0,-1) -- cycle;
\fill (-.5774,0) circle(.75pt);
\fill (0,1) circle(.75pt);
\fill (.5774,0) circle(.75pt);
\draw (-.5774,0) -- (0,1) -- (.5774,0) -- cycle;
\draw (-.8660,.5) -- (.8660,.5);
\draw (-.5774,1) -- (.2887,-.5);
\draw (.5774,1) -- (-.2887,-.5);
\draw (-.2887,-.5) -- (.2887,-.5);
\draw (-.5774,1) -- (-.8660,.5);
\draw (.5774,1) -- (.8660,.5);
\draw (0,.3) node [font=\footnotesize] {$c$};
\draw (0,.65) node [font=\footnotesize] {$b_1$};
\draw (-.28,.18) node [font=\footnotesize] {$b_2$};
\draw (.30,.18) node [font=\footnotesize] {$b_3$};
\draw (0,-.35) node [font=\footnotesize] {$a_1$};
\draw (-.57,.65) node [font=\footnotesize] {$a_2$};
\draw (.59,.65) node [font=\footnotesize] {$a_3$};
\draw (0,1.18) node [font=\footnotesize] {$(1,0,0)$};
\draw (-.65,0) node [font=\footnotesize, left] {$(0,0,1)$};
\draw (.65,0) node [font=\footnotesize, right] {$(0,1,0)$};
\end{tikzpicture}
\caption{``Type diagram'' of reciprocal exponent triples $\left(\frac{1}{p_1} \, , \, \frac{1}{p_2} \, , \, \frac{1}{p_3}\right)$.} \label{typediagram}
\end{figure}

Let $(I_\mu^i)_{\mu\in\Z}$, $i=1,2,3$, be three families of intervals in $\R$ such that the $I^{i}_\mu$ are pairwise disjoint for each $i$, and such that for each $\mu$ we have $I^3_\mu=-I_\mu^1-I_{\mu}^2$. Assume in addition that $s_\mu=\frac{|I_\mu^2|}{|I_\mu^1|}\ge 1$; thus in particular $|I_\mu^1|$ is the shortest among the three intervals $I_\mu^i$.\footnote{It will be clear from the proof that the condition $s_\mu\geq 1$ can be replaced with $s_\mu\gtrsim 1$.}

Let $\varphi_{\mu}^i$ be a fixed function adapted to and supported on $I_{\mu}^i$.
For each suitable function $f^i$ on $\R$ we define the smooth Fourier restriction $f^i_\mu$ of $f^i$ to $I^i_\mu$ via $$\widehat{f^i_\mu}=\widehat{f^i}\varphi_\mu^i.$$  Define the operator $M_a$ of modulation by $a$ via $$\widehat{M_a(f)}(\xi)=\widehat{f}(\xi+a).$$

For an arbitrary line $l_\mu$ in the plane with slope $s_\mu$, define the line $l_\mu^3$  in $\R^3$ by
$$l_\mu^3 = \{(\xi,\eta,\theta):(\xi,\eta)\in l_\mu,\text{ and }\theta=-\xi-\eta\}.$$ Given $(\xi_\mu,\eta_\mu,\theta_\mu)\in l_\mu^3$, define a trilinear form $\Lambda_\mu$ via
\[\Lambda_\mu(f^1,f^2,f^3)=\int H_{s_\mu}\big(M_{\xi_\mu}f^1_\mu,M_{\eta_\mu}f^2_\mu\big)(x) \; M_{\theta_\mu}{f}^3_\mu(x)\, {\mathrm d}x,\] and note that in fact the definition does not depend on the particular choice of  $(\xi_\mu,\eta_\mu,\theta_\mu)$.

\begin{theorem}
\label{main2}
The  tri-sublinear form
$$(f^1,f^2,f^3)\mapsto \sum_\mu |\Lambda_\mu(f^1,f^2,f^3)|$$
is bounded on  $L^{p_1}(\R) \times L^{p_2}(\R) \times L^{p_3}(\R)$
whenever $1<p_1<2<p_2,p_3<\infty$ and $$\frac1{p_1}+\frac1{p_2}+\frac1{p_3}=1.$$
\end{theorem}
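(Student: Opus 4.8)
The plan is to deduce Theorem~\ref{main2} from the ``model sum'' estimate Theorem~\ref{main1} by discretizing each bilinear Hilbert transform $H_{s_\mu}$ in the standard time--frequency fashion and then exploiting the disjointness of the frequency supports $I_\mu^i$ to organize all the resulting tiles into a single collection of tri-tiles. First I would recall that, modulo harmless error terms, each form $\Lambda_\mu$ can be written as a sum over a (rank-one) collection of tri-tiles $\pv \in \Pv_\mu$ of the usual wave-packet expression $\frac{1}{|I_{\pv}|^{1/2}}\langle f^1, \schwartz_{\pv_1}\rangle \langle f^2, \schwartz_{\pv_2}\rangle \langle f^3, \schwartz_{\pv_3}\rangle$, where the wave packets $\schwartz_{\pv_i}$ have frequency support inside a fixed dilate of $I_\mu^i$ and the spatial scales range over all scales comparable to the reciprocals of the $|I_\mu^i|$. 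This is exactly the discretization underlying the Muscalu--Tao--Thiele uniform estimate, and the modulations $M_{\xi_\mu}$, $M_{\eta_\mu}$, $M_{\theta_\mu}$ are absorbed into the frequency centers of the tiles. The point of the modulation invariance remark in the statement is precisely that we may choose the frequency centers so that $\pv_i$ has frequency interval contained in (a fixed enlargement of) $I_\mu^i$.

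The key step is then to observe that, because the intervals $(I_\mu^i)_\mu$ are pairwise disjoint for each fixed $i$, the union $\Pv := \bigcup_\mu \Pv_\mu$ is again an \emph{admissible} collection of tri-tiles of the type handled by Theorem~\ref{main1}: for two tri-tiles coming from different values of $\mu$, their $i$-th frequency components lie in disjoint intervals $I_\mu^i$ and $I_{\mu'}^i$, so the two tri-tiles are automatically ``separated'' in the $i$-th coordinate and in particular can never be in the order relation that would violate the rank-one / sparseness hypotheses. Within a single $\mu$ the collection $\Pv_\mu$ is admissible by the usual Bilinear-Hilbert-transform discretization. Hence $\sum_\mu |\Lambda_\mu(f^1,f^2,f^3)| \lesssim \sum_{\pv \in \Pv} \frac{1}{|I_{\pv}|^{1/2}} |\langle f^1,\schwartz_{\pv_1}\rangle| \, |\langle f^2,\schwartz_{\pv_2}\rangle| \, |\langle f^3,\schwartz_{\pv_3}\rangle|$, which is exactly the model sum controlled by Theorem~\ref{main1} in the exponent range $1<p_1<2<p_2,p_3<\infty$ with $\frac1{p_1}+\frac1{p_2}+\frac1{p_3}=1$. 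Applying that theorem to $\Pv$ yields the desired bound on $L^{p_1}\times L^{p_2}\times L^{p_3}$.

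The remaining points to check are mostly bookkeeping. One has to verify that the error terms in the discretization of each $H_{s_\mu}$ — which are themselves smooth bilinear multipliers, hence paraproduct-like — are also covered, ideally absorbed into the same model sum after expanding in wave packets adapted to $I_\mu^i$ (this uses $s_\mu \gtrsim 1$, i.e.\ that the three scales are comparable, so that the Whitney/Calder\'on--Zygmund decomposition of $\frac{1}{t}$ produces pieces at scales $\sim |I_\mu^1|$). One also has to make sure the implicit constants in the discretization are uniform in $\mu$ (equivalently uniform in $s_\mu \ge 1$), which is precisely the content of the uniform BHT estimates and is where the lacunary/disjoint-support structure is not yet needed. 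The main obstacle is really conceptual rather than computational: one must be confident that Theorem~\ref{main1} is stated for an \emph{arbitrary} admissible collection of tri-tiles with constants depending only on the exponents (and not on, e.g., the number of ``trees'' or the eccentricities $s_\mu$), so that feeding it the aggregated collection $\Pv = \bigcup_\mu \Pv_\mu$ genuinely produces a single global bound; this is exactly the ``aggregate the Bessel-type estimates into one global Bessel estimate'' mechanism advertised in the introduction, and verifying that the hypotheses of Theorem~\ref{main1} are met by $\Pv$ — in particular that the cross-$\mu$ tri-tiles do not spoil rank-one-ness or the separation conditions — is the step that requires the most care.
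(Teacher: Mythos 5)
Your high-level strategy --- discretize each $\Lambda_\mu$ in the Muscalu--Tao--Thiele fashion and then invoke Theorem \ref{main1} --- is the same as the paper's, and the paper's proof is indeed just this two-step reduction: one writes $\Lambda_\mu(f^1,f^2,f^3)=\Lambda_\mu^*\big(M_{\xi_\mu}f^1_\mu,M_{\eta_\mu}f^2_\mu,M_{\theta_\mu}f^3_\mu\big)$ with $\Lambda_\mu^*(f^1,f^2,f^3)=\int H_{s_\mu}(f^1,f^2)f^3$, and cites the first three sections of \cite{MTT} to dominate $|\Lambda_\mu^*|$ by boundedly many model sums over $\Pv_1^\mu$. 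However, there are three concrete problems with your execution. First, the model you write down, $|I_{\pv}|^{-1/2}\langle f^1,\schwartz_{\pv_1}\rangle\langle f^2,\schwartz_{\pv_2}\rangle\langle f^3,\schwartz_{\pv_3}\rangle$, is the Lacey--Thiele rank-one form, which is \emph{not} the discretization underlying the uniform estimates: when $s_\mu$ is large, the $2$- and $3$-tiles have areas $s_\mu$ and $1+s_\mu$, so a single wave packet per tile cannot represent the corresponding frequency projections with constants uniform in $\mu$. The model that Theorem \ref{main1} actually controls is the MTT form $\int \chi_{I_\pv,j_\pv}\prod_i \pi_{\omega_{P_i}}f^i$, with genuine multiplier operators $\pi_{\omega_{P_i}}$ and a smooth spatial cutoff, and your reduction must land on that expression.

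Second, the aggregation step you flag as requiring the most care is misdirected. Theorem \ref{main1} is not stated for an arbitrary merged collection $\bigcup_\mu\Pv_\mu$: its statement already carries the sum $\sum_\mu|\cdots|$ on the outside, keeps the collections $\Pv_1^\mu$ separate (they live on different Whitney lattices $\Q_\mu=L_\mu(\tilde\Q)$, and the paper explicitly never compares tiles corresponding to distinct $\mu$), and has the frequency localizations $f^i_\mu$ and modulations built in. So there is nothing to verify about cross-$\mu$ order relations, rank-one-ness, or separation; the disjointness of the $I_\mu^i$ is exploited \emph{inside} the proof of Theorem \ref{main1}, through the orthogonality of the $f^i_\mu$ and the resulting global Bessel inequality, not through tile geometry across $\mu$. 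Third, you omit the degenerate case in which the line $l_\mu$ misses the rectangle $I_\mu^1\times I_\mu^2$: Theorem \ref{main1} requires choosing $\xi_\mu\in I_\mu^1$, $\eta_\mu\in I_\mu^2$, $\theta_\mu\in I_\mu^3$ on $l^3_\mu$, and when no such choice exists the paper shows that $\Lambda_\mu$ collapses to $c_\mu\int f^1_\mu f^2_\mu f^3_\mu$ with $c_\mu=O(1)$ and treats those $\mu$ by a simpler version of the argument. None of these issues is fatal to the strategy, but as written the reduction does not go through.
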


Thus, our result establishes boundedness in the triangle $b_1$ in the type diagram of Figure \ref{typediagram}.  In the locally $L^2$ region $c$ of the type diagram in Figure \ref{typediagram}, \textit{viz.}~for $2<p_1,p_2,p_3<\infty$, the boundedness of $\sum_\mu |\Lambda_\mu(f^1,f^2,f^3)|$ follows immediately from the uniform estimates for $H_{s}$ from \cite{Thieleuniform}, combined with  Rubio de Francia's Littlewood--Paley inequality; see for example Lemma 1 of \cite{GL}. The use of the Rubio de Francia inequality is restricted to this locally $L^2$ range, and our argument shows how to avoid it when dealing with larger ranges of exponents.\footnote{Again, while our results are stated only for $1<p_1<2<p_2,p_3<\infty$, the methods below apply equally well in the locally $L^2$ setting.} Our approach exploits the orthogonality of the functions $f^i_\mu$ in a slightly more subtle way, by turning ``local'' Bessel-type estimates into a ``global'' Bessel inequality. We rely heav
 ily on the tools developed in \cite{MTT}, and in doing so we attempt to keep technicalities to a minimum. It is plausible that the same idea of obtaining a global Bessel inequality could further extend the range of exponents in Theorems \ref{main3} and \ref{main2}, but we will not pursue that here.

We now lay the groundwork for our third result.  First, we recall some notation introduced in \cite{MTT}, which we shall adapt to the current context.  The purpose of this discussion is to develop certain ``model sum'' forms for the forms and operators appearing in Theorems \ref{main2} and \ref{main3}; by a standard time-frequency discretization procedure, we will eventually reduce the boundedness results of those theorems to the boundedness of such model sum forms, which is established in Theorem \ref{main1} below.  Cf.~Sections 2 and 3 of \cite{MTT}; our notation is largely identical to that appearing therein.

Define the diagonal \[\tilde \Gamma' =\{(\xi,\xi,\xi):\xi\in \R\}.\]  For each $\mu\in\Z$, let ${\bf v}_\mu=(1,s_\mu,-1-s_\mu)$ with $s_\mu\ge 1$. Note that in \cite{MTT} it is the third component of ${\bf v}_\mu$ that is the shortest (and normalized to 1), while in our case it will always be the first component. This will account for the asymmetry in the range of exponents appearing in our results.  Let $L_\mu: \R^3 \to \R^3$ be the linear transformation given by $$L_\mu(\xi,\eta,\theta)=\big(\xi,s_\mu\eta,(-1-s_\mu)\theta\big).$$

Following \cite{MTT}, we choose a sufficiently large but unspecified $N$ depending on $p_1,p_2,p_3$ which will encode the decay of functions in physical space; we will refer to $N$ as the \emph{spatial parameter}. We will also need some $C_0\ge 2^{300}$; at various points we may need to increase the values of $N$ and $C_0$ to accommodate our argument.  As in \cite{MTT}, we define $J:=2^{C_0}$.

We denote by $\tilde \Q$ the collection of all cubes $\tilde Q$ in $\R^3$ with dyadic side-lengths $2^{j}$, centered in $2^{j-10} \Z^3$, and satisfying the Whitney conditions\footnote{We have a 10 in \eqref{whitney2first}, while in \cite{MTT} there is a 4. The argument in \cite{MTT} does not change.}
\begin{equation}\label{whitney1first}
 C_0 \, \tilde Q\cap \tilde\Gamma '=\emptyset,
\end{equation}
\begin{equation}\label{whitney2first}
 10\, C_0 \, \tilde Q\cap \tilde\Gamma '\neq \emptyset.
\end{equation}
For each $\mu \in \Z$ we  introduce the collection of boxes $$\Q_\mu:=\{L_\mu(\tilde Q):\tilde Q\in \tilde \Q\}$$
and  the collection $\Pv_1^{\mu}$ of \emph{multi-tiles}. Each multi-tile $\pv =(P_1,P_2,P_3)\in \Pv_1^{\mu}$ is identified by a spatial dyadic interval $I_{\pv}$ and a ``frequency box'' $Q_{\pv} = \omega_{P_1}\times \omega_{P_2}\times\omega_{P_3}\in \Q_\mu$, such that $|I_{\pv}||\omega_{P_1}|=1$. We refer to the components $P_i:=I_{\pv}\times \omega_{P_i}$ of $\pv$ as  \emph{$(i,\mu)$-tiles}, or, if no confusion arises, as \emph{$i$-tiles} or  simply as \emph{tiles}.  When we refer to tiles $P$ without explicit reference to multi-tiles, they will be notated as $P=I_P \times \omega_P$.

For a multi-tile $\pv$, we let $j_{\pv}$ denote the  number  such that $2^{-J \cdot j_{\pv}}=|I_{\pv}|$, which encodes the ``spatial scale'' of the multi-tile $\pv$. Similarly, for a box $Q = \omega_1 \times \omega_2 \times \omega_3 \in \Q_\mu$ we let $j_{Q}$ denote the  number  such that $2^{J \cdot j_{Q}}=|\omega_1|$; thus for a multi-tile $\pv$ as above, we have $j_{\pv} = j_{Q_{\pv}}$.

From now on, for an interval $\omega \subset \R$, $\pi_\omega$ will denote a multiplier operator
\begin{equation}
\label{stmultiplerety}
\widehat{\pi_\omega f}=m_\omega\widehat{f},
\end{equation}
where $m_\omega$ is a fixed function adapted to and supported on $\omega$.  In effect, such operators will be used to frequency-localize a triple of functions to the frequency box $Q_{\pv}$ of a multi-tile as above.

We would like to continue by localizing the triple of functions to the spatial interval $I_{\pv}$ of the multi-tile; so as not to destroy the frequency localization already established, the spatial localization will be achieved via a smoother version of the cutoff $\chi_{I_\pv}$.  Let $\Xi$ denote a fixed positive function with total $L^1$-mass $1$ and with Fourier transform supported in $[-2^{-2\K}, 2^{-2\K}]$, satisfying the pointwise estimates
\begin{equation}\label{eta-bounds}
 C^{-1} (1 + |x|)^{-N^2} \leq \Xi(x) \leq C (1 + |x|)^{-N^2}.
\end{equation}
Let $\Xi_j$ denote the $L^1$-normalized dilate $\Xi_j(x) := 2^{-\K j}\Xi(2^{-\K j}x)$.  For any subset $E$ of $\R$, define a ``smoothed-out'' characteristic function $\chi_{E,j}$ of $E$ by
$$ \chi_{E,j} := \chi_E * \Xi_{j}.$$  Note that for each $\mu$ and each spatial scale $j_0$ we have the natural partition of unity
\begin{equation}\label{spatialdiscret}
1 = \sum_{\pv \in \Pv_1^{\mu}\; : \; j_{\pv} = j_0} \chi_{I_\pv , j_\pv}.
\end{equation}

Both Theorem \ref{main2} and Theorem \ref{main3} will be consequences of the following; as mentioned above, it should be seen as a ``model sum estimate'' for those results.

\begin{theorem}
\label{main1}For each $\mu \in \Z$, let $I_\mu^i$ be as before.  For each given $(i,\mu$)-tile $P_i$, let $\pi_{\omega_{P_i}}$ be a multiplier operator as in \eqref{stmultiplerety}.  Then for any choice of $\xi_\mu\in I_{\mu}^1$, $\eta_\mu\in I_\mu^2$, $\theta_\mu\in I_\mu^3$, the tri-sublinear form
\begin{equation}
(f^1,f^2,f^3) \longmapsto \sum_\mu \Big | \sum_{\pv \in \Pv_1^{\mu}}
\int \chi_{I_\pv\, ,\, j_\pv} \; \pi_{\omega_{P_1}}(M_{\xi_\mu} f^1_\mu)\; \pi_{\omega_{P_2}}(M_{\eta_\mu} f^2_\mu) \; \pi_{\omega_{P_3}}(M_{\theta_\mu} f^3_\mu) \Big|
\end{equation}
is bounded on  $L^{p_1}(\R) \times L^{p_2}(\R) \times L^{p_3}(\R)$
whenever $1<p_1<2<p_2,p_3<\infty$ and $$\frac1{p_1}+\frac1{p_2}+\frac1{p_3}=1.$$
\end{theorem}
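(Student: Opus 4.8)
The plan is to prove Theorem \ref{main1} by adapting the time-frequency argument of \cite{MTT}, with one crucial twist: instead of treating each $\mu$ separately and summing (which would lose in the outer exponent unless one is in the locally $L^2$ range), I will exploit the disjointness of the frequency supports $(I_\mu^i)_\mu$ to build a \emph{single} global tree-decomposition argument, aggregating the local Bessel inequalities into one global Bessel inequality. First I would fix the model sum and, exactly as in \cite{MTT}, linearize the absolute value $\sum_\mu |\cdots|$ by choosing measurable signs $\epsilon_\mu = \pm 1$, so that it suffices to bound the genuinely trilinear form $\sum_\mu \epsilon_\mu \sum_{\pv \in \Pv_1^\mu} \int \chi_{I_\pv, j_\pv}\, \pi_{\omega_{P_1}}(M_{\xi_\mu}f^1_\mu)\,\pi_{\omega_{P_2}}(M_{\eta_\mu}f^2_\mu)\,\pi_{\omega_{P_3}}(M_{\theta_\mu}f^3_\mu)$ uniformly in the $\epsilon_\mu$. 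By the usual reduction to restricted-type estimates (interpolation between vertices, with the understanding that the relevant exponent range $1<p_1<2<p_2,p_3<\infty$ lives in the triangle $b_1$), I would fix measurable sets $E_1,E_2,E_3$ and replace $f^i$ by functions supported on $E_i$ with $\|f^i\|_\infty \le 1$, reducing to proving the form is $\lesssim |E_3|$ (say, with the exceptional-set argument handling the normalization $|E_1|=|E_2|=1$ appropriately for whichever $p_i$ is $<2$).

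The core of the argument is the organization of the multi-tiles $\bigcup_\mu \Pv_1^\mu$ into trees and the estimation of the single-tree contribution and the tree-counting. For a fixed $\mu$, the tree estimates are precisely those of \cite{MTT} (Lemma in Section 4 there): for a tree $\tree \subseteq \Pv_1^\mu$ with top $\pv_\tree$, one controls the tree-sum by $|I_{\pv_\tree}|$ times a product of three ``sizes'' $\size_i(\tree)$, where $\size_i$ measures the $L^2$ density of $f^i_\mu$ near the frequency intervals $\omega_{P_i}$. The key new point is the \emph{global} Bessel/energy estimate: because the intervals $I_\mu^i$ are pairwise disjoint in $\mu$ for each fixed $i$, the smooth restrictions $\{f^i_\mu\}_\mu$ are almost-orthogonal, and so the sum over $\mu$ of the squared energies $\sum_\mu \|f^i_\mu\|_2^2 \lesssim \|f^i\|_2^2$; more precisely, when one runs the standard energy/Bessel argument (\cite{MTT}, Section 5, Proposition on the energy of a collection of trees), the tiles from distinct $\mu$'s contribute to essentially disjoint frequency regions, so the overlap estimates that feed the Bessel inequality hold uniformly across all $\mu$ simultaneously. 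This lets me define a single global energy $\mathcal E_i := \sup_{\mu}\sup_{\tree \subseteq \Pv_1^\mu} (\text{normalized } L^2 \text{ density})$ and prove $\mathcal E_i \lesssim \|f^i\|_{p_i}$ (or the restricted-type analog $\lesssim |E_i|^{1/p_i}$) once and for all, rather than per-$\mu$. The global size and energy are then inserted into the standard interpolation-of-trees / stopping-time machinery: decompose all multi-tiles into collections $\T_n$ of trees with $\size_i \sim 2^{-n_i}\mathcal E_i$ and count $\sum_{\tree \in \T_n} |I_{\pv_\tree}| \lesssim 2^{2\min(n_i)} \cdot (\text{stuff})$ via the Bessel inequality applied globally, then sum the geometric series in $n = (n_1,n_2,n_3)$; the asymmetric exponent range $p_1 < 2$ is handled exactly as the shortest-component asymmetry in \cite{MTT}, since here the first component of ${\bf v}_\mu$ is the shortest.

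Assembling these: the single-tree estimate gives $|\Lambda_\tree| \lesssim |I_{\pv_\tree}| \prod_i \size_i(\tree)$; summing over trees in $\T_n$ and using the global tree-counting bound gives a contribution $\lesssim \prod_i \min(2^{-n_i}\mathcal E_i, \ldots)$ type bounds, and summing over $n$ yields $\lesssim \prod_i \mathcal E_i^{\theta_i}$ for appropriate exponents $\theta_i$ with $\sum \theta_i$ matching the Hölder scaling; feeding in $\mathcal E_i \lesssim |E_i|^{1/p_i}$ closes the restricted-type estimate and hence Theorem \ref{main1} by interpolation. The main obstacle, and the place requiring genuine care rather than citation, is establishing the \emph{global} Bessel inequality: one must verify that the almost-orthogonality from the disjoint frequency supports $I_\mu^i$ genuinely survives the passage through the multi-tile structure — in particular that a tile $P_i \in \Pv_1^\mu$ and a tile $P_i' \in \Pv_1^{\mu'}$ with $\mu \ne \mu'$ really do have frequency intervals $\omega_{P_i}, \omega_{P_i'}$ that are separated (or at worst boundedly overlapping) so that the usual $TT^*$/Bessel estimate of \cite{MTT} can be summed in $\mu$ with a uniform constant. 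This requires tracking how $L_\mu$ and the Whitney cubes $\tilde Q$ interact with the modulations $M_{\xi_\mu}$ and the fixed windows $\varphi_\mu^i$ supported on $I_\mu^i$; once this separation is in hand, the rest is a faithful, if lengthy, transcription of the Muscalu--Tao--Thiele scheme with global sizes and energies in place of the local ones.
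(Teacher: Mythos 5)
Your overall strategy --- reduce to restricted type, run the Muscalu--Tao--Thiele tree/size/Bessel machinery, and convert per-$\mu$ Bessel inequalities into a single global one using the disjointness of the $I_\mu^i$ --- is exactly the mechanism of the paper's proof, and you have correctly identified the global Bessel inequality as the heart of the matter. However, your implementation of that step diverges from the paper's in a way that creates the very difficulty you flag at the end without resolving it. You propose a \emph{single} global stopping-time/tree selection over $\bigcup_\mu \Pv_1^\mu$, which forces you to compare tiles from different collections $\Pv_1^\mu$ and to verify that $\omega_{P_i}$ and $\omega_{P_i'}$ are separated for $\mu\neq\mu'$ so that the $TT^*$ overlap estimates sum uniformly. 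This is not merely ``lengthy'': the boxes in $\Q_\mu = L_\mu(\tilde\Q)$ live on incommensurable anisotropic grids depending on $s_\mu$, the order relation $\le$ and the sparseness/regularity structure are only set up within a fixed $\mu$, and the paper explicitly never compares tiles across distinct $\mu$. The paper sidesteps all of this by running the greedy forest selection \emph{separately for each $\mu$}, obtaining local Bessel inequalities $\sum_{T\in\T_{n_i,\mu}^i}|I_T|\lesssim 2^{2n_i}\|f^i_\mu\|_2^2$, and only then summing over $\mu$: the pairwise orthogonality of the $f^i_\mu$ (an immediate consequence of the disjointness of the $I_\mu^i$) gives $\sum_\mu\|f^i_\mu\|_2^2\lesssim |E_i|$, hence the global count $\sum_{T\in\T_{n_i}^i}|I_T|\lesssim 2^{2n_i}|E_i|$, with no cross-$\mu$ tile geometry ever entering. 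You should restructure your argument this way, or else actually prove the cross-$\mu$ almost-orthogonality of tiles that you are currently assuming.

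Two further points are glossed over. First, when you intersect the three forest decompositions (one for each $i$) to extract, from a tree $T\in\T_{n_i,\mu}^i$, the piece lying in all three, the resulting subtree need not be regular, so the single-tree estimate does not apply directly; the paper needs Lemmas \ref{convexunionforest} and \ref{lema3:10} to repair this. Second, the exceptional set must be built from $E_1$ (the index with $p_1<2$) and used to restrict $f^3$; tiles whose spatial intervals sit deep inside $\Omega$ are handled separately, with the geometric decay of $\size_3^*$ from Proposition \ref{size-exc} compensating the growth $8^l|E_1|$ of $\size_1^*$, and for those tiles the selected trees degenerate to singletons. Your phrase ``handled appropriately'' does not engage with this, and it is precisely where the asymmetric hypothesis $1<p_1<2$ is cashed in.
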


\section{Notation}
For each $1\le p<\infty$ we let
\[\mathcal{M}_pf=(\mathcal M_1(f^p))^{1/p},\] where
$\mathcal M_1 f$ is the standard Hardy--Littlewood maximal function of $f$.

For each interval $I\subset \R$ centered at $c(I)$ and with length $|I|$, and for each $\alpha>0$, we define $\alpha I$ to be the interval with length $\alpha|I|$ centered at $c(I)$. Also, if $\alpha\in\R\setminus \{0\}$ and $I=[a,b]$, then $\alpha\cdot I=[\alpha a,\alpha b]$.  These definitions extend naturally to Cartesian products of intervals in $\R^n$.

If $I=I_1\times\ldots \times I_m$, we will use the notation
$$\tilde{\chi}_I(x)=\left(1+\left|\left(\frac{x_1-c(I_1)}{|I_1|},\ldots,\frac{x_m-c(I_m)}{|I_m|}\right)\right|\right)^{-1}$$ for $x = (x_1, \ldots ,x_m) \in \R^m$.
\begin{definition}
A function $\phi:\R^m\to\C$ is said to be \emph{$L^p$ $C$-adapted of order $M$} to a box $I:=I_1\times\ldots\times I_m$ if
$$|\partial^{\alpha}\phi(x)|\le C\prod_{j=1}^{m}|I_{j}|^{-1/p-\alpha_{j}}\tilde{\chi}_{I}^M(x),$$
for each multi-index $\alpha$ with $0\leq  |\alpha| \leq M$.
\end{definition}
If $p$ is not specified, it will be implicitly understood that $p=\infty$. Occasionally, we will loosely call a function ``$L^p$-adapted to $I$'' if it is smooth and $L^p$ $C_M$-adapted of each order $M \geq 1$,  for some $C_M$ whose value will not be specified, but such that $C_M=O(1)$ for each $M$.

The implicit bounds hidden in the notation $a\lesssim b$ will be allowed to depend on the constants of adaptation and on fixed parameters such as $C_0$, $\alpha$, $N$, $M_1$, $M_2$ or $p_i$, but they will never depend on variable parameters such as $k_0$, $\mu$, $k$ or $l$. The notation $A\approx B$ will mean that $A\lesssim B$ and $B\lesssim A$.

\section{Tiles and trees}
In the next few sections we will focus on proving  Theorem \ref{main1}. We start by briefly recalling some constructions and results from \cite{MTT}. We refer the reader to \cite{MTT} for a more comprehensive account of the time-frequency analysis that we need.

First, we will make the collection $\tilde{\Q}$ of Whitney cubes sparser.
\begin{definition}
A subcollection $\tilde{\Q'}\subseteq \tilde{\Q}$ is \emph{sparse} if for any $\tilde Q = \tilde \omega_1 \times \tilde \omega_2 \times \tilde \omega_3$, $\tilde{Q'} = \tilde \omega_1' \times \tilde  \omega_2' \times \tilde  \omega_3' \in \tilde{\Q'}$ and any $1\le i\le 3$  we have:
\begin{equation}\label{sparse-scale}
|\tilde \omega_i|< |\tilde \omega_i'|\implies |\tilde \omega_i|\le 2^\K
|\tilde \omega_i'|\ ,
\end{equation}
\begin{equation}\label{sparse-single-scale}
|\tilde \omega_i|=|\tilde \omega_i'|,\
\tilde \omega_i\neq \tilde \omega_i'
 \implies \dist(\tilde \omega_i,\tilde \omega_i')\ge 2^\K\
|\tilde\omega_i|\ ,
\end{equation}
\begin{equation}\label{sparse-injectivity}
\tilde \omega_i=\tilde \omega_i'
\implies \tilde Q=\tilde{Q}'\ .
\end{equation}
\end{definition}

As in \cite{MTT}, we may decompose $\tilde \Q$ into a bounded number of sparse subcollections. Thus,
it suffices to prove Theorem \ref{main1} with $\tilde{\Q}$ replaced by a sparse subset of the original $\tilde{\Q}$; for convenience we will continue to call this sparse collection $\tilde \Q$.
Via a standard limiting argument, we also assume that $\tilde{\Q}$ is finite; the estimates we obtain will not depend on the particular finite choice of $\tilde{\Q}$. By a similar argument we can also assume as in \cite{MTT} that
$I_{\pv}\subset [-2^{Jk_0},2^{Jk_0}],$ for some fixed large $k_0$, for each $\pv\in \bigcup_\mu \Pv_1^\mu$. Thus each of the new collections $\Pv_1^\mu$ will consist of a finite number of multi-tiles. All our estimates will be independent of $k_0$.

Next, given $\tilde{Q} = \omega_1 \times \omega_2 \times \omega_3 \in \tilde{\Q}$, for each interval $1000 \, \omega_i$ define a slightly (by at most one percent on either side) largened interval $\overline {\omega}_i\supset 1000 \, \omega_i$ such that these largened intervals have the following property:  If $\tilde Q,\tilde Q'\in \tilde\Q$, $\diam(\tilde Q)<\diam(\tilde{Q}')$, and $10 \, \overline{\omega}_i\cap {\overline{\omega}'}_j\neq \emptyset$ for some $1\leq i,j\leq 3$, then $10 \, \overline{\omega}_{i'} \subseteq {\overline{\omega}'}_j$ for all $1\le i'\le 3$.  The construction of $\overline {\omega}_i$ is explained in Section 2 of \cite{MTT}; if $P=I_P\times \omega_P$ is an $i$-tile, then $\overline{\omega}_{P}$ is to be understood in the sense just described.

The following definition establishes an order relation on the set of tiles.

\begin{definition}\label{tile-order-def}
Let $P$, $P'$ be $(i,\mu)$-tiles for some $1 \leq i \leq 3$ and for a fixed $\mu$.
We say that $P \leq P'$ if $I_P \subseteq  I_{P'}$ and
$\overline{\omega}_{P} \supseteq \overline{\omega}_{P'}$.
If $\pv$ and $\pv'$ are two multi-tiles in some collection $\Pv_1^{\mu}$, we say that $\pv\le \pv'$
if $P_i\le P_i'$ for some $1 \leq i \leq 3$. Finally, we say that $\pv\lessdot \pv'$ if $\overline{\omega}_{P_i} \supseteq \overline{\omega}_{P'_i}$ for some $1 \leq i \leq 3$.
\end{definition}

It is easy to see that if $\pv\not=\pv'$ and $P_i\le P_i'$ for some $i$, then $P_j\le P_i'$ for each $j$. In particular, this makes $\le$ transitive on multi-tiles (as transitivity on tiles is clear).  It is important to realize  that we never need to and never will compare multi-tiles or tiles corresponding to distinct parameters $\mu$.

An important concept in \cite{MTT} is that of a {\em regular} set of multi-tiles, and we will adapt it to the current context.\footnote{The terminology of ``regularity'' does not appear in \cite{MTT}; our regular sets play the role of the \emph{convex} sets appearing therein.}
\begin{definition}
Let $\P\subset\Pv_1^\mu$ be a collection of multi-tiles corresponding to a fixed parameter $\mu$. For $\pv\in\P$ define the \emph{support} $E_{\pv,\P}$ of $\pv$ to be the set
\[E_{\pv \, , \, \P} := \bigcup_{\substack{\pv' \in \P, \\ Q_{\pv} = Q_{\pv'}}} I_{\pv'}.\]
Then $\P$ is called \emph{regular} if $E_{\pv\, ,\, \P}\subset E_{\pv'\, ,\, \P}$ whenever $\pv,\pv'\in \P$ and $\pv\lessdot\pv'$.
\end{definition}
Note that, by construction, each $\Pv_1^\mu$ is itself regular.

A dyadic interval is called \emph{$\K$-dyadic} if it has length
$2^{\K \cdot j}$ for some $j\in \Z$.
If $\xi=(\xi_1,\xi_2,\xi_3)\in L_\mu(\tilde\Gamma')$ and $I$ is a $\K$-dyadic interval,
we define
\begin{align*}
\omega_{1,\xi,I} &:=[\xi_1-\frac 12  |I|^{-1},\xi_1+\frac 12 |I|^{-1}]\, , \\
\omega_{2,\xi,I} &:=[\xi_2-\frac 12  s_\mu|I|^{-1},\xi_2+\frac 12 s_\mu|I|^{-1}]\, , \\
\omega_{3,\xi,I} &:=[\xi_3-\frac 12  (1+s_\mu)|I|^{-1},\xi_3+\frac 12 (1+s_\mu)|I|^{-1}]\, ,
\end{align*}
and
$$\overline{\omega}_{\xi,I}:=[L_\mu^{-1}(\xi)_i-500 |I|^{-1},L_\mu^{-1}(\xi)_i+500 |I|^{-1}]\, ,$$
where we observe that the right-hand side of the last display is actually independent of $i$.

\begin{definition}\label{tree-def} Let $\xi\in L_\mu(\tilde\Gamma')$,
let $I$ be a $\K$-dyadic interval, and let $T$ be a set of multi-tiles in some fixed collection $\vec{\textbf{P}}_1^\mu$.
The triple $(T,\xi,I)$ is called a \emph{tree} if $T \neq \emptyset$,
$I_\pv\subseteq I$ for all $\pv\in T$,
and for all $\pv\in T$ there exists $1\le i\le 3$ with
$\overline{\omega}_{\xi,I} \subseteq \overline{\omega}_{P_i}$.

We let $\omega_{i,T}$ and $\overline{\omega}_{T}$ denote $\omega_{i,\xi,I}$ and $\overline{\omega}_{\xi,I}$, respectively.  The pair $(\xi,I)$ is called the \emph{top data} of the tree. We will often refer to the set $T$ itself as a ``tree'' when explicit mention of the top data $(\xi_T,I_T)$ is not required.
\end{definition}

We emphasize that all multi-tiles in a given tree are drawn from a single collection $\Pv_1^\mu$.  Let us continue by recalling the definition of size from \cite{MTT}.\footnote{Our definition of $\size_i(T)$ introduced here is slightly smaller than the one in \cite{MTT}; the estimates \eqref{mpi-est} and \eqref{mpi-est-tree} above easily imply the corresponding estimates (33) and (35) from \cite{MTT}, since $(\xi_T)_i\in C\omega_{P_i}$ for some universal $C$. The reader can check that all the results in \cite{MTT}, in particular Proposition \ref{tree-est-prop} below, actually rely on our smaller version of size. Working with the size from \cite{MTT} would have created only some minor technical complications.}
\begin{definition}\label{size-def}
Let $1 \leq i \leq 3$, and let $f^i$ be an $L^2$ function.  Additionally, for a suitable function $\varphi$, we will let $\pi_\varphi$ denote a Fourier multiplier operator with symbol $\varphi$.

For $P_i$ an $i$-tile and $\xi_i \in \R$, define the semi-norm $\|f^i\|_{P_i,\xi_i}$ by
\be{fips}
\| f^i \|_{P_i, \xi_i} := \sup_{m_{P_i}} \big\| \tilde \chi_{I_\pv}^{10} \, \pi_{m_{P_i}}(f^i) \big\|_2;
\end{equation}
here the supremum ranges over all smooth functions $m_{P_i}$ adapted to and supported on $10 \, \omega_{P_i}$ which satisfy the estimates

\be{mpi2yfgrygf340904-est}
|m_{P_i}(\xi)| \leq \frac{|\xi - \xi_i|}{|\omega_{P_i}|}
\end{equation}
 and
\be{mpi-est}
\left|\frac{\mathrm d^km_{P_i}(\xi)}{\mathrm d\xi^k}\right| \leq \frac{1}{|\omega_{P_i}|^k}
\end{equation}
for all $\xi \in \R$ and $1 \leq k \leq N^2$.

For $T$ a tree, define the \emph{$i$-size} $\size_{i}(T)$ of $T$ with respect to $f^i$ by
\be{size-nonmax-def}
\size_{i}(T,f^i) := \bigg(\frac{1}{|I_T|} \sum_{\pv \in T} \| f^i\|_{P_i,(\xi_T)_i}^2\bigg)^{1/2}+
|I_T|^{-\frac 12}
\sup_{m_{i,T}}\big\|\tilde{\chi}_{I_T}^{10}\pi_{m_{i,T}}(f^i)\big\|_2,
\end{equation}
where the supremum ranges over all smooth $m_{i,T}$ adapted to and supported on
$10 \, \omega_{i,T}$ which satisfy the estimates

\be{mpidfnier674r4op[r4;-est-tree}
| m_{i,T}(\xi)| \leq
\frac{|\xi - (\xi_T)_i|}{|\omega_{i,T}|}\, ,
\end{equation}

\be{mpi-est-tree}
\left|\frac{\mathrm d^km_{i,T}(\xi)}{\mathrm d\xi^k}\right|\leq
\frac{1}{|\omega_{i,T}|^k}
\end{equation}
for all $\xi \in \R$ and $1 \leq k \leq N^2$.

Finally, for any collection $\Pv$ of multi-tiles, define the \emph{maximal size} $\size^*_{i}(\Pv)$ of $\Pv$ with respect to $f^i$ to be
\be{size-max-def}
\size^*_{i}(\Pv,f^i) := \sup_{(T,\xi,I):T \subseteq \Pv} \size_{i}(T,f^i)
\end{equation}
where the supremum ranges over all trees with $T\subseteq \Pv$.
\end{definition}

Most of the time the function $f^i$ with respect to which the size is computed will be clear from context, and we will then suppress the notational dependence on the function.

The most important result we need to invoke from \cite{MTT} is the following single-tree estimate.

\begin{proposition}\label{tree-est-prop}  Let $T$ be a regular tree, and let $f^1, f^2, f^3$ be test
functions on $\R$ with
\be{infty-bound}
\| f^i \|_\infty \le 1
\end{equation}
for all $1 \leq i \leq 3$.
Let $\pi_{\omega_{P_i}}$ be any operators as in \eqref{stmultiplerety}, with multipliers adapted to ${\omega_{P_i}}$; note that $\pi_{\omega_{P_i}}=\pi_{\omega_{P_i'}}$ whenever $\omega_{P_i}=\omega_{P_i'}$.

Then we have
\be{tree-est}
\bigg|\sum_{\pv \in T}
\int \chi_{I_\pv,j_\pv} \prod_{i=1}^3 \pi_{\omega_{P_i}} f^i\bigg| \lesssim_{\theta_2,\theta_3}
|I_T| \prod_{i=1}^3 \size^*_i(T)^{\theta_i}
\end{equation}
whenever $\theta_1 = 1$ and $0 < \theta_2,\theta_{3} < 1$.
\end{proposition}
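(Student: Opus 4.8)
This is, essentially verbatim, the single-tree estimate of \cite{MTT}, and the plan is to import that argument while recording the changes forced by our setup: in \cite{MTT} it is the \emph{third} coordinate that is shortest and normalized to $1$, whereas here $|I_\pv|\,|\omega_{P_1}|=1$ and $\omega_{P_1}$ is always the shortest of the three, and we also use the slightly enlarged Whitney constant in \eqref{whitney2first} and the marginally smaller size fixed in Definition \ref{size-def}. The first reduction is to a tree with a single \emph{top index}: by Definition \ref{tree-def} each $\pv\in T$ satisfies $\overline{\omega}_{\xi_T,I_T}\subseteq\overline{\omega}_{P_i}$ for some $1\le i\le 3$, so splitting $T$ according to a choice of such $i$ writes $T=T_1\cup T_2\cup T_3$, with each $T_i$ a tree all of whose multi-tiles share the same top index; by subadditivity it suffices to bound each piece, so we relabel and assume $T$ has top index $i_T$. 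Following \cite{MTT}, one performs the usual demodulation, replacing each $f^i$ by $M_{(\xi_T)_i}f^i$ and conjugating the operators $\pi_{\omega_{P_i}}$ accordingly, so that the frequency data are centred near the origin; this changes neither $\|f^i\|_\infty$ nor the left-hand side (the net modulation is trivial because $\xi_T\in L_\mu(\tilde\Gamma')$), and it places the two non-top frequency intervals in lacunary position with respect to $\xi_T$, so that the ensuing multipliers are of the type entering the definition of $\|f^i\|_{P_i,(\xi_T)_i}$ and are therefore controlled by $\size_i(T)$.

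With these reductions in place, the argument is that of \cite{MTT}. One establishes two families of single-tree estimates: a ``Bessel-type'' $L^2$ estimate
\[
\sum_{\pv\in T}\int\tilde\chi_{I_T}^{10}\,\chi_{I_\pv,j_\pv}\,\big|\pi_{\omega_{P_i}}f^i\big|^2\;\lesssim\;|I_T|\,\size_i(T)^2
\]
for each non-top index $i\ne i_T$, coming from the sparseness of $\tilde\Q$ (at each scale the $\omega_{P_i}$ are well separated, and across scales they are lacunary) together with Definition \ref{size-def}; and a ``$\BMO$-type'' estimate bounding $\sup_{\pv\in T}\big\|\chi_{I_\pv,j_\pv}\,\pi_{\omega_{P_{i_T}}}f^{i_T}\big\|_\infty$ by a multiple of $\size_{i_T}(T)$, obtained by collapsing the nested intervals $\overline{\omega}_{P_{i_T}}\supseteq\overline{\omega}_{\xi_T,I_T}$ onto the single multiplier adapted to $\omega_{i_T,T}$ appearing in \eqref{size-nonmax-def}, with $\|f^{i_T}\|_\infty\le1$ supplying the complementary $L^\infty$ endpoint. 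Using the spatial partition of unity \eqref{spatialdiscret} and the near-disjointness of the intervals $I_\pv$ at each scale — this is where the regularity of $T$ enters, playing the role of the ``convexity'' of \cite{MTT} — one then estimates $\sum_{\pv\in T}\int\chi_{I_\pv,j_\pv}\prod_i\pi_{\omega_{P_i}}f^i$ by H\"older in $x$ (one $L^\infty$ factor from the top index, two $L^2$ factors from the non-top indices) followed by the Cauchy--Schwarz inequality in $\pv$, arriving at $|I_T|\prod_i\size^*_i(T)$.

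It remains to pass from exponents all equal to $1$ to the stated $\theta_1=1$, $0<\theta_2,\theta_3<1$; this is automatic, because after the normalizations above one has $\size^*_i(T)\lesssim1$ for every $i$ (which is precisely the purpose of the hypothesis \eqref{infty-bound}), so $|I_T|\prod_i\size^*_i(T)^{\theta_i}\gtrsim|I_T|\prod_i\size^*_i(T)$ and the desired weaker estimate follows. The asymmetry — that the first index always carries the full power $\theta_1=1$, no matter which index is the top of $T$ — is exactly the ``shortest index carries the full power'' feature of \cite{MTT} (there, the third index). Consequently the main obstacle is really just to verify that our asymmetric normalization (the shortest component being the first rather than the third), together with the enlarged Whitney constant and the smaller size, leaves every step of the \cite{MTT} proof intact; the bookkeeping itself — in particular, controlling the overlap of the Schwartz tails $\chi_{I_\pv,j_\pv}$ and of the weights $\tilde\chi$ over the many scales present in $T$ without incurring a logarithmic loss — is carried out precisely as in \cite{MTT}, and I would not reproduce it here.
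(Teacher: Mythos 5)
The paper offers no proof of this proposition at all: it is invoked verbatim from \cite{MTT} (their single-tree estimate), with the footnote at Definition \ref{size-def} explaining why the slightly smaller size used here still feeds into the \cite{MTT} argument. Your top-level strategy of importing that argument and tracking the changes forced by the asymmetric normalization is therefore the same as the paper's. The problem is that your sketch of what the \cite{MTT} argument actually does contains a substantive error, concentrated in your final paragraph.

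The reduction of $0<\theta_2,\theta_3<1$ to the case $\theta_2=\theta_3=1$ is illegitimate, because the estimate with all exponents equal to $1$ is \emph{false}, and is not what \cite{MTT} prove. The seminorms \eqref{fips} and the second term of \eqref{size-nonmax-def} only control multiplier operators whose symbols \emph{vanish at the central frequency} $(\xi_T)_i$, by \eqref{mpi2yfgrygf340904-est}; the operators $\pi_{\omega_{P_i}}$ appearing in \eqref{tree-est} are merely adapted to $\omega_{P_i}$ and have no such vanishing. Concretely, if $f^2$ is a modulated bump of height $1$ with frequency support in a $\delta$-neighborhood of $(\xi_T)_2$, then $\size_2^*(T,f^2)\to 0$ as $\delta\to 0$ while $\pi_{\omega_{P_2}}f^2$ stays of unit size on $I_T$, so the left side of \eqref{tree-est} does not vanish and no bound linear in $\size_2^*(T)$ can hold; the same example defeats your displayed Bessel-type inequality, which has $\pi_{\omega_{P_i}}f^i$ rather than a vanishing-symbol truncation on its left-hand side. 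The actual mechanism in \cite{MTT} is to split, for each non-area-one component $i=2,3$, the symbol of $\pi_{\omega_{P_i}}$ into a part vanishing at $(\xi_T)_i$ (controlled by $\size_i^*(T)$ via a genuine Bessel inequality) and a remaining ``constant'' part that telescopes across the scales of the tree into a paraproduct controlled only by $\|f^i\|_\infty\le 1$; interpolating between these two contributions is precisely what produces the fractional exponents $\theta_2,\theta_3<1$, and only the area-one component (here $i=1$) carries the full power. So the fractional-exponent estimate must be proved directly; it cannot be deduced from an all-exponents-one version, and the ``H\"older plus Cauchy--Schwarz'' outline you give is the Lacey--Thiele-type argument for area-one tiles, not the uniform-estimate tree argument of \cite{MTT} that this proposition actually rests on.
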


We now briefly explain how to decompose  certain sub-trees of regular trees into regular sub-trees.
\begin{definition}\label{max-tree-def}
Fix $\mu \in \Z$. Consider a subset $\Pv$ of $\Pv_1^\mu$, some $\xi\in\R$, and some $J$-dyadic interval $I$. Then the \emph{maximal tree $T^*$ in $\Pv$ with top data $(\xi,I)$} is the set of all $\pv\in \Pv$ such that $I_\pv\subseteq I$ and $\overline{\omega}_{\xi,I}\subseteq \overline{\omega}_{P_i}$ for some $i\in\{1,2,3\}$. A tree in $\Pv$ is called \emph{maximal} if it is the maximal tree in $\Pv$ with some top data $(\xi,I)$.
\end{definition}

\begin{definition}
A \emph{tree selection process} consists of selecting a tree $T_1 \subset \Pv$, then continuing iteratively by selecting a tree at each stage from the complement of all the previously selected trees; that is, at the $k$-th stage we select a tree $T_k \subset \Pv\setminus (T_1\cup\ldots\cup T_{k-1})$. The trees $T_k$ chosen in such a process will be referred to as ``selected trees'' and their constituent multi-tiles as ``selected multi-tiles.''  A tree selection process is called \emph{greedy} if each selected tree $T_k$ is maximal in $\Pv \setminus (T_1\cup\ldots\cup T_{k-1})$.
\end{definition}

It is proven in Lemma 4.7 of \cite{MTT} that if $\Pv$ is regular, then all trees selected from $\Pv$ via a greedy selection process are regular. The same ideas can be used to prove:
\begin{lemma}
\label{convexunionforest}
Let $T_{k},T_{k+1},\ldots, T_{l}$ be consecutive trees selected from a regular collection $\Pv\subset \Pv_1^\mu$ via a greedy selection process. Define the collection of multi-tiles $S=T_{k}\cup \ldots\cup T_{l}$. Let $\pv,\pv'\in S$ and let $\pv''\in \Pv$ be such that $\pv'\le\pv''\le \pv$ and $|I_{\pv'}|<|I_{\pv''}|<|I_{\pv}|$.  Then $\pv''\in S$.
\end{lemma}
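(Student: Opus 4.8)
The plan is to mimic the proof of Lemma 4.7 in \cite{MTT} (regularity of greedily selected trees), extracting from it the mechanism by which membership in a selected tree propagates along the order relation $\le$. The key observation is that $\pv''$ must be selected at \emph{some} stage, because $\pv'' \in \Pv$ and $\Pv$ itself is finite, hence every multi-tile of $\Pv$ eventually gets selected in the full greedy process; the point is to pin down the stage. So suppose for contradiction that $\pv''$ is selected at some stage $m$ with either $m < k$ or $m > l$. I would derive a contradiction in each case by using the ``sandwich'' hypothesis $\pv' \le \pv'' \le \pv$ together with the intermediate-scale hypothesis $|I_{\pv'}| < |I_{\pv''}| < |I_{\pv}|$, playing the maximality of the greedily selected trees against the order relation.

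First I would recall, from Definition \ref{tile-order-def} and the remark following it, that $\pv'' \le \pv$ with $\pv'' \ne \pv$ forces $P_i'' \le P_i$ for \emph{every} $i$, so in particular $\overline{\omega}_{P_i} \subseteq \overline{\omega}_{P_i''}$ and $I_{\pv''} \subseteq I_{\pv}$ for all $i$; likewise $\pv' \le \pv''$ gives $\overline{\omega}_{P_i''} \subseteq \overline{\omega}_{P_i'}$ and $I_{\pv'} \subseteq I_{\pv''}$ for all $i$. Now consider the two bad cases. \emph{Case $m > l$:} the multi-tile $\pv \in S$ was selected at some stage $r \le l < m$, say $\pv \in T_r$ with top data $(\xi,I)$; then by Definition \ref{tree-def} there is an $i$ with $\overline{\omega}_{\xi,I} \subseteq \overline{\omega}_{P_i} \subseteq \overline{\omega}_{P_i''}$, and since $I_{\pv''} \subseteq I_{\pv} \subseteq I$ as well, $\pv''$ satisfies exactly the defining conditions to lie in the \emph{maximal} tree with top data $(\xi,I)$ inside $\Pv \setminus (T_1 \cup \cdots \cup T_{r-1})$. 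Since $T_r$ was chosen maximal there and $\pv''$ had not yet been selected (as $m > l \ge r$), this forces $\pv'' \in T_r \subseteq S$, contradicting $m > l$. \emph{Case $m < k$:} here I would instead use $\pv' \in S$, selected at some stage $r$ with $k \le r$, hence $r > m$; at stage $m$ the multi-tile $\pv'$ was still available (not yet selected), and by the same argument $\pv'$ lies in the maximal tree with the top data $(\xi'', I'')$ of $T_m$ — using $\overline{\omega}_{\xi'',I''} \subseteq \overline{\omega}_{P_i''} \subseteq \overline{\omega}_{P_i'}$ for the appropriate $i$ and $I_{\pv'} \subseteq I_{\pv''} \subseteq I''$ — so maximality of $T_m$ forces $\pv' \in T_m$, i.e.\ $\pv'$ is selected at stage $m < k$, contradicting $\pv' \in S = T_k \cup \cdots \cup T_l$.

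The main obstacle, and the reason the strict scale inequalities $|I_{\pv'}| < |I_{\pv''}| < |I_{\pv}|$ appear in the hypothesis, is handling the degenerate possibilities $\pv'' = \pv$ or $\pv'' = \pv'$, and more subtly the possibility that $\pv''$ coincides with a selected multi-tile only through an equality of some frequency component (where the order relation $\le$ need not be antisymmetric without invoking \eqref{sparse-injectivity} and the finiteness/genericity reductions made on $\tilde\Q$). The strict inequalities on the spatial intervals rule out $\pv'' \in \{\pv, \pv'\}$ outright and guarantee that the containments of the $\overline\omega$'s are genuinely nested at distinct scales, so that the ``maximal tree'' arguments above place $\pv''$ (or $\pv'$) squarely inside an already-selected tree rather than merely on its boundary. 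I would therefore be careful to state at the outset that $\pv'',\pv,\pv'$ are pairwise distinct and that, by the reductions already in force, distinct multi-tiles of $\Pv$ are selected at distinct stages, so that ``$\pv''$ is selected at stage $m$'' is unambiguous; the rest is the two-case maximality argument sketched above. \endprf
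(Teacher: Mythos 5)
Your argument is correct and is essentially the paper's own proof: the paper likewise plays $\pv'\le\pv''$ against greediness to conclude that $\pv''$ cannot have been selected before $T_k$, and plays $\pv''\le\pv$ against the maximality of the tree $T_s$ containing $\pv$ to conclude that $\pv''$ is selected at or before stage $s\le l$, just stated directly rather than by contradiction. The only cosmetic wrinkle is your appeal to the claim that every multi-tile of $\Pv$ is eventually selected (not guaranteed by the definition of a selection process, which need not exhaust $\Pv$); this is harmless because your Case $m>l$ reasoning already shows that $\pv''$, if still unselected, must land in the very tree containing $\pv$.
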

\begin{proof}
 Denote by $k\le s\le l$ the index such that $\pv\in T_s$. Note that $\pv''$ could not have been selected earlier than the tree $T_k$, since by greediness $\pv'$ should then have been selected at or before the same step. But, by the same reasoning, $\pv''$ must be selected at or before the $s$-th step since $\pv'' \leq \pv$.  Thus $\pv'' \in T_j$ for some $k \leq j \leq s$, and hence $\pv'' \in S$.

\end{proof}

An immediate consequence of Proposition \ref{tree-est-prop} and Lemma \ref{convexunionforest} is the following.
\begin{lemma}
\label{lema3:10}
Fix a regular collection $\Pv\subset \Pv_1^\mu$, and let $T\subset \Pv$ be a regular tree. Consider also two independent greedy selection processes from $\Pv$, with tree outcomes $T_1,T_2,\ldots$ and $T_1',T_2',\ldots$, respectively. Define the collections of multi-tiles $S=T_{k}\cup T_{k+1}\cup\ldots\cup T_{l}$ and $S'=T_{k'}'\cup T_{k'+1}'\cup\ldots\cup T_{l'}'$ for some $k\le l$ and $k'\le l'$. Define the subtree $\widetilde{T}=T\cap S\cap S'$ of $T$. Then
\be{sub-tree-est}
\bigg|\sum_{\pv \in \widetilde{T}}
\int \chi_{I_\pv,j_\pv} \prod_{i=1}^3 \pi_{\omega_{P_i}} f^i\bigg| \lesssim_{\theta_2,\theta_3}
|I_T| \prod_{i=1}^3 \size^*_i(T)^{\theta_i}
\end{equation}
whenever $\theta_1 = 1$ and $0 < \theta_2,\theta_{3} < 1$.
\end{lemma}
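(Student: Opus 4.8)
The plan is to reduce the claimed estimate for the truncated subtree $\widetilde T = T\cap S\cap S'$ to the single-tree estimate of Proposition \ref{tree-est-prop} by exhibiting $\widetilde T$ as a regular tree. First I would observe that $\widetilde T$ inherits from $T$ all the structural ingredients of a tree: it is a subset of multi-tiles of the fixed collection $\Pv_1^\mu$, we have $I_\pv\subseteq I_T$ for every $\pv\in\widetilde T$, and the defining tree condition $\overline\omega_{\xi_T,I_T}\subseteq\overline\omega_{P_i}$ for some $i$ holds for each $\pv\in T$, hence a fortiori for each $\pv\in\widetilde T$. So $(\widetilde T,\xi_T,I_T)$ is a tree (assuming it is nonempty; if $\widetilde T=\emptyset$ the left-hand side of \eqref{sub-tree-est} is zero and there is nothing to prove). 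Since $\widetilde T\subseteq T$ and sizes are monotone under set inclusion (being suprema and sums of nonnegative quantities over sub-collections), we get $\size_i^*(\widetilde T)\le\size_i^*(T)$ for each $i$, and $|I_{\widetilde T}|=|I_T|$. Thus, once regularity of $\widetilde T$ is established, Proposition \ref{tree-est-prop} applied to $\widetilde T$ gives exactly the bound $|I_{\widetilde T}|\prod_i\size_i^*(\widetilde T)^{\theta_i}\le |I_T|\prod_i\size_i^*(T)^{\theta_i}$, which is \eqref{sub-tree-est}.

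The heart of the argument, and the step I expect to be the main obstacle, is verifying that $\widetilde T$ is regular, i.e.\ that $E_{\pv,\widetilde T}\subseteq E_{\pv',\widetilde T}$ whenever $\pv,\pv'\in\widetilde T$ and $\pv\lessdot\pv'$. Here Lemma \ref{convexunionforest} is the key tool: it says that $S$ and $S'$ are ``convex'' with respect to the order $\le$ on multi-tiles, in the sense that any $\pv''\in\Pv$ sandwiched strictly (in spatial scale) between two selected multi-tiles of $S$ with $\pv'\le\pv''\le\pv$ must itself lie in $S$; likewise for $S'$. The strategy is to take $\pv,\pv'\in\widetilde T$ with $\pv\lessdot\pv'$ and an arbitrary point $x\in E_{\pv,\widetilde T}$, so $x\in I_{\pv_0}$ for some $\pv_0\in\widetilde T$ with $Q_{\pv_0}=Q_\pv$; I must produce $\pv_1\in\widetilde T$ with $Q_{\pv_1}=Q_{\pv'}$ and $x\in I_{\pv_1}$. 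Since $T$ is already regular and $\widetilde T\subseteq T$, one knows such a $\pv_1$ exists in $T$ with $x\in I_{\pv_1}$ and $Q_{\pv_1}=Q_{\pv'}$ (using $E_{\pv,T}\subseteq E_{\pv',T}$); the issue is only to show $\pv_1\in S\cap S'$. This should follow by placing $\pv_1$ between appropriate multi-tiles of $S$ (resp.\ $S'$) in the order $\le$ and invoking Lemma \ref{convexunionforest}: the multi-tiles $\pv_0\in\widetilde T\subseteq S$ and $\pv'\in\widetilde T\subseteq S$ provide the sandwiching, using $\pv\lessdot\pv'$ to compare frequency boxes and the spatial nesting $I_{\pv_1}\subseteq I_{\pv'}$, $I_{\pv_0}\subseteq I_{\pv_1}$ coming from $x$ lying in all three intervals together with the dyadic structure.

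The delicate points I anticipate are: (i) translating the condition $\pv\lessdot\pv'$ (a frequency-box containment $\overline\omega_{P_i}\supseteq\overline\omega_{P'_i}$ for some $i$) together with $Q_{\pv_0}=Q_\pv$ and $Q_{\pv_1}=Q_{\pv'}$ into the order relations $\pv_0\le\pv_1$ and $\pv_1\le\pv'$ (or the appropriate variants) needed to apply Lemma \ref{convexunionforest}, using the remark in the excerpt that $P_i\le P_i'$ for one $i$ forces $P_j\le P_i'$ for all $j$, hence effectively an order on multi-tiles; (ii) handling the strict-inequality hypothesis $|I_{\pv'}|<|I_{\pv''}|<|I_{\pv}|$ in Lemma \ref{convexunionforest} — if the relevant spatial scales coincide, then by the sparseness/injectivity conditions \eqref{sparse-injectivity} the multi-tiles are forced to be equal and the desired membership is immediate, so one reduces to the strict case; and (iii) keeping the two selection processes bookkept separately, applying Lemma \ref{convexunionforest} once for $S$ and once for $S'$, and intersecting. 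Once $\pv_1\in T\cap S\cap S'=\widetilde T$ is secured, regularity of $\widetilde T$ follows, and the proof concludes by the reduction to Proposition \ref{tree-est-prop} described above. I would also remark that this lemma is the mechanism by which the global Bessel argument later decomposes a large forest into pieces on which single-tree estimates can be summed.
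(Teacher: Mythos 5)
Your reduction to Proposition \ref{tree-est-prop} hinges on the claim that $\widetilde T$ itself is a regular tree, and that claim is exactly where the argument breaks down; indeed the paper's proof opens by stating that $\widetilde T$ is \emph{not} necessarily regular. The failure occurs at your step (i)--(ii): given $\pv\lessdot\pv'$ in $\widetilde T$ and $x\in I_{\pv_0}\subseteq E_{\pv,\widetilde T}$, regularity of $T$ does produce $\pv_1\in T$ with $Q_{\pv_1}=Q_{\pv'}$ and $x\in I_{\pv_1}$, and the lower sandwiching $\pv_0\le\pv_1$ is fine; but the upper sandwiching $\pv_1\le\pv'$ requires $I_{\pv_1}\subseteq I_{\pv'}$, which you assert but which is false in general. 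The point $x$ is an arbitrary point of $E_{\pv,\widetilde T}$ and may lie far from $I_{\pv'}$; the multi-tile $\pv_1$ merely shares the frequency box $Q_{\pv'}$ and can sit over a completely different part of $I_T$. Without an upper element of $S$ (resp.\ $S'$) dominating $\pv_1$ in the order $\le$, Lemma \ref{convexunionforest} gives you no control: $\pv_1$ could perfectly well be selected only \emph{after} step $l$ of the greedy process (e.g.\ because its spatial interval is not contained in the top interval of any of $T_k,\dots,T_l$), in which case $\pv_1\notin S$ and $E_{\pv,\widetilde T}\not\subseteq E_{\pv',\widetilde T}$.

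The paper repairs this by partitioning $\widetilde T$ according to its maximal multi-tiles $\pv^{(1)},\pv^{(2)},\dots$, setting $\widetilde T^{(i)}=\{\pv\in\widetilde T:\pv\le\pv^{(i)}\}$; since the $I_{\pv^{(i)}}$ are pairwise disjoint these sets partition $\widetilde T$. Within $\widetilde T^{(i)}$ the maximal element $\pv^{(i)}\in S\cap S'$ supplies precisely the missing upper bound for the sandwich (every relevant $\pv_1$ satisfies $\pv_0\le\pv_1\le\pv^{(i)}$ with the correct spatial nesting, because $x\in I_{\pv_0}\subseteq I_{\pv^{(i)}}$ forces $I_{\pv_1}\subseteq I_{\pv^{(i)}}$), so Lemma \ref{convexunionforest} applies to each selection process and each $\widetilde T^{(i)}$ is regular. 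One then applies Proposition \ref{tree-est-prop} to each $\widetilde T^{(i)}$, obtaining a bound $|I_{\pv^{(i)}}|\prod_j\size_j^*(T)^{\theta_j}$, and sums over $i$ using $\sum_i|I_{\pv^{(i)}}|\le|I_T|$. Your proposal is missing this decomposition, and with it the entire mechanism that makes the lemma true.
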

\begin{proof}
The tree $\widetilde{T}$ is not necessarily regular, but we will fix this issue as follows. Let $\pv^{(1)},\pv^{(2)},\ldots$ be the maximal (with respect to $\le$) multi-tiles in $\widetilde{T}$. For each $i$ define $$\widetilde{T}^{(i)}=\{\pv\in\widetilde{T}:\pv\le \pv^{(i)}\}.$$
Since the intervals $I_{\pv^{(i)}}$ are pairwise disjoint, the trees $\widetilde{T}^{(i)}$ partition $\widetilde{T}$. Lemma \ref{convexunionforest} shows that each $\widetilde{T}^{(i)}$ is regular; in short, a failure of regularity would entail the omission of multitiles playing the role of $\pv''$ in the lemma, while $\pv^{(i)} \in \widetilde T^{(i)}$ plays the role of $\pv$ and prohibits such an omission.  Applying Proposition \ref{tree-est-prop} to each $\widetilde{T}^{(i)}$, the result follows from the fact that $|\sum_{i}I_{\pv^{(i)}}|\le |I_T|$.

\end{proof}

\section{Size estimates}
\label{sec:size}
As noted earlier, the first component plays a special role since the 1-tiles have area 1. We will begin by deriving improved estimates over those in \cite{MTT} for $\size_1$.

\begin{proposition}
\label{netqw679679034or0-rjgnb}
We have
$$\size_1(T)\lesssim \sup_{\psi_{\pv}}\frac{1}{|I_T|^{1/2}}\bigg\|\Big(\sum_{\vec{P}\in T}\frac{|\langle f^1,\psi_{\pv}\rangle|^21_{I_{\pv}}}{|I_{\pv}|}\Big)^{1/2}\bigg\|_2+\inf_{x\in I_T} \mathcal M_1(f^1)(x),$$
where the supremum is taken over all $\psi_{\pv}$ such that $M_{(\xi_T)_1}\psi_{\pv}$ is $L^2$ adapted of order 1 to $I_{\pv}$ and has mean zero.
\end{proposition}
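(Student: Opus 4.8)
The plan is to estimate the two summands in the definition \eqref{size-nonmax-def} of $\size_1(T)$ separately: the ``tile'' term $\big(\frac1{|I_T|}\sum_{\pv\in T}\|f^1\|_{P_1,(\xi_T)_1}^2\big)^{1/2}$ will be controlled by the square-function term on the right, and the ``tree-top'' term $|I_T|^{-1/2}\sup_{m_{1,T}}\|\tilde\chi_{I_T}^{10}\pi_{m_{1,T}}(f^1)\|_2$ by $\inf_{I_T}\mathcal M_1(f^1)$. For the first it is useful to observe that, since $\|1_{I_\pv}\|_2^2=|I_\pv|$, the square-function term equals $\big(\frac1{|I_T|}\sum_{\pv\in T}|\langle f^1,\psi_\pv\rangle|^2\big)^{1/2}$; hence it suffices to produce, for each $\pv\in T$, an admissible $\psi_\pv$ --- one with $M_{(\xi_T)_1}\psi_\pv$ of mean zero and $L^2$-adapted of order $1$ to $I_\pv$ --- satisfying $\|f^1\|_{P_1,(\xi_T)_1}\lesssim|\langle f^1,\psi_\pv\rangle|$.

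For the tile term, fix $\pv$ and a near-optimal multiplier $m_{P_1}$ in \eqref{fips}, put $u=\pi_{m_{P_1}}(f^1)$ (so $\widehat u$ is supported in $10\,\omega_{P_1}$, an interval of length $\lesssim|I_\pv|^{-1}$ lying within $O(|I_\pv|^{-1})$ of $(\xi_T)_1$, using $(\xi_T)_1\in C\omega_{P_1}$), and fix a smooth nonnegative $\Theta_{I_\pv}$ with $\tilde\chi_{I_\pv}^{20}\le\Theta_{I_\pv}\lesssim\tilde\chi_{I_\pv}^{19}$ adapted to $I_\pv$ (the smoothing circumvents the non-smoothness of $\tilde\chi_{I_\pv}$). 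Then $\|\tilde\chi_{I_\pv}^{10}u\|_2^2\le\langle\Theta_{I_\pv}u,u\rangle=\langle f^1,\pi_{\overline{m_{P_1}}}(\Theta_{I_\pv}u)\rangle$, so setting $\psi_\pv:=\langle\Theta_{I_\pv}u,u\rangle^{-1/2}\pi_{\overline{m_{P_1}}}(\Theta_{I_\pv}u)$ gives $\langle f^1,\psi_\pv\rangle=\langle\Theta_{I_\pv}u,u\rangle^{1/2}\ge\|\tilde\chi_{I_\pv}^{10}u\|_2\gtrsim\|f^1\|_{P_1,(\xi_T)_1}$. The mean-zero property of $M_{(\xi_T)_1}\psi_\pv$ follows since $m_{P_1}((\xi_T)_1)=0$ forces $\widehat{\psi_\pv}((\xi_T)_1)=0$. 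For the adaptation of $M_{(\xi_T)_1}\psi_\pv$ to $I_\pv$ --- the real point --- write $\rho:=u/\langle\Theta_{I_\pv}u,u\rangle^{1/2}$, so $\|\tilde\chi_{I_\pv}^{10}\rho\|_2\le1$; since $\rho$ is band-limited at frequency scale $|I_\pv|^{-1}$, a reproducing-kernel Cauchy--Schwarz argument (an ``$L^2$ Bernstein'' estimate) upgrades this to the pointwise bounds $|\partial^k\rho(x)|\lesssim|I_\pv|^{-1/2-k}(1+|x-c(I_\pv)|/|I_\pv|)^{10}$, so $\Theta_{I_\pv}\rho$ is $L^2$-adapted (of order $\ge 9$) to $I_\pv$; applying $\pi_{\overline{m_{P_1}}}$, i.e.\ convolution with an $L^1$-bump at scale $|I_\pv|$ modulated by a frequency within $O(|I_\pv|^{-1})$ of $(\xi_T)_1$, preserves $L^2$-adaptation (with a harmless loss of order) and, after demodulating by $(\xi_T)_1$, yields exactly the required form. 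Summing $\|f^1\|_{P_1,(\xi_T)_1}^2\lesssim|\langle f^1,\psi_\pv\rangle|^2$ over $\pv\in T$ and dividing by $|I_T|$ gives the bound on the tile term.

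For the tree-top term, fix $m_{1,T}$. Integrating by parts in $\pi_{m_{1,T}}(f^1)=f^1*\check m_{1,T}$ and using $|m_{1,T}^{(k)}|\le|\omega_{1,T}|^{-k}=|I_T|^{k}$ on $\supp m_{1,T}\subseteq10\,\omega_{1,T}$ yields $|\check m_{1,T}(x)|\lesssim|I_T|^{-1}(1+|x|/|I_T|)^{-N^2}$, hence $|\pi_{m_{1,T}}(f^1)|\le|f^1|*|\check m_{1,T}|$. Partitioning $\R$ into length-$|I_T|$ intervals $J_l$ with $I_T\subseteq J_0$, one gets $\sup_{J_l}(|f^1|*|\check m_{1,T}|)\lesssim\sum_m(1+|l-m|)^{-N^2}|I_T|^{-1}\int_{J_m}|f^1|$; since $J_m$ lies in a ball of radius $\lesssim(1+|m|)|I_T|$ about any point of $I_T$, each average is $\lesssim(1+|m|)\inf_{I_T}\mathcal M_1(f^1)$, and summing the rapidly convergent series gives $\sup_{J_l}|\pi_{m_{1,T}}(f^1)|\lesssim(1+|l|)\inf_{I_T}\mathcal M_1(f^1)$. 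Since $\tilde\chi_{I_T}^{10}\lesssim(1+|l|)^{-10}$ on $J_l$, summing $\sum_l(1+|l|)^{-20}\cdot|I_T|\cdot(1+|l|)^2\lesssim|I_T|$ yields $\|\tilde\chi_{I_T}^{10}\pi_{m_{1,T}}(f^1)\|_2^2\lesssim|I_T|\,\big(\inf_{I_T}\mathcal M_1(f^1)\big)^2$, uniformly in $m_{1,T}$; dividing by $|I_T|^{1/2}$ and taking the supremum completes this term, and adding the two bounds proves the proposition.

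The step I expect to be the main obstacle is the adaptation verification in the second paragraph: showing that the dual function $\psi_\pv$ is genuinely $L^2$-adapted to $I_\pv$, not merely to some dilate or translate of it. The difficulty is that dualizing a spatially weighted $L^2$ norm only controls $\psi_\pv$ in a weighted-$L^2$ sense a priori; turning this into pointwise decay requires exploiting the band-limitedness of $u$ through the $L^2$ Bernstein estimate, after which one must track (routinely but carefully) how convolution with scale-$|I_\pv|$ bumps and bounded modulations act on adapted functions, together with the minor bookkeeping of replacing $\tilde\chi_{I_\pv}$ by a smooth comparable weight where differentiability is needed.
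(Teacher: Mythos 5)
Your proof is correct. For the tree-top term your argument (kernel decay of $\check m_{1,T}$ after demodulation, plus the doubling estimate \eqref{hdfuhuferu48990r746r83uy} for $\mathcal M_1$) is essentially the same as the paper's ``similar considerations.'' For the tile term, however, you take a genuinely different route. The paper expands $\pi_{m_{P_1}}f^1$ in a discrete wave-packet frame $\sum_J \langle f^1,\phi_J^{(1)}\rangle\,\phi_J^{(2)}$ over dyadic $J$ at scale $|I_{\pv}|$, uses almost-orthogonality to bound $\|\tilde\chi_{I_\pv}^{10}\pi_{m_{P_1}}f^1\|_2^2$ by $\sum_J\big(\tfrac{1+\dist(J,I_\pv)}{|I_\pv|}\big)^{-200}|\langle f^1,\phi_J^{(1)}\rangle|^2$, and absorbs the decay factor into the dual packets so each remains admissible; this leaves, for each tile, a rapidly weighted sum over $J$ that still has to be reorganized into single admissible families $(\psi_{\pv})_{\pv\in T}$ (a convex-combination/selection step the paper leaves implicit). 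You instead dualize directly, writing $\|\tilde\chi_{I_\pv}^{10}u\|_2^2\le\langle\Theta_{I_\pv}u,u\rangle=\langle f^1,\pi_{\overline{m_{P_1}}}(\Theta_{I_\pv}u)\rangle$ and showing the normalized dual function is itself an admissible $\psi_\pv$; the mean-zero property comes for free from \eqref{mpi2yfgrygf340904-est}, and the adaptation is exactly where the work sits, via the weighted $L^2$ Bernstein estimate for the band-limited $\rho$ (which is correct as you state it, with the growth exponent $10$ matched against the weight exponents $19$--$20$ of $\Theta_{I_\pv}$, and with derivatives taken only after demodulating by $(\xi_T)_1$ so that the symbol lives within $O(|I_\pv|^{-1})$ of the origin). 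What your approach buys is a single testing function per tile and hence an immediate passage to the supremum in the proposition; what it costs is the harder analytic verification that the dual function is genuinely adapted to $I_\pv$ rather than merely controlled in a weighted $L^2$ sense, which you correctly identify and resolve. The paper's approach makes the adaptation of the testing functions automatic but pays with the almost-orthogonality computation and the reassembly of the $J$-sum. Both arguments are valid and yield the stated estimate.
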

\begin{proof}
Write each
$$\pi_{m_{P_1}}f^1=\sum_{J\text{ dyadic}: \; |J|=|I_{\pv}|}\langle f^1,\phi_J^{(1)}\rangle \; \phi_J^{(2)}$$
where each $M_{(\xi_T)_1}\phi_J^{(i)}$ is $L^2$ adapted to $J$ of order 1 and has mean zero.

Next note that if the spatial parameter $N$ is sufficiently large, then
$$\Big\|\tilde \chi_{I_\pv}^{10}\sum_{J\text{ dyadic}:\; |J|=|I_{\pv}|}\langle f^1,\phi_J^{(1)}\rangle \; \phi_J^{(2)}\Big\|_2^2 \: \lesssim \: \sum_{J\text{ dyadic}: \; |J|=|I_{\pv}|}\bigg(\frac{1+\dist(J,I_{\pv})}{|I_{\pv}|}\bigg)^{-200}|\langle f^1,\phi_J^{(1)}\rangle|^2.$$
The conclusion will follow by noting that
$$\bigg(\frac{1+\dist(J,I_{\pv})}{|I_{\pv}|}\bigg)^{-100}M_{(\xi_T)_1}\phi_J^{(1)}$$
is $L^2$ adapted of order 1 to $I_{\pv}$ and has mean zero.

The estimate
$$|I_T|^{-\frac 12} \sup_{m_{i,T}}\big\|\tilde{\chi}_{I_T}^{10}\pi_{m_{i,T}}(f^i)\big\|_2 \; \lesssim \; \inf_{x\in I_T}\mathcal M_1(f^1)(x)$$
follows via similar considerations, together with the fact that (see \eqref{hdfuhuferu48990r746r83uy})
\begin{align*}
\inf_{x\in J}\mathcal M_1(f^1)(x) &\leq 100 \, \dist(I_T,J)^2 \inf_{x\in 10 \, \dist(I_T,J)J} \mathcal M_1(f^1)(x)\\
 &\leq 100 \, \dist(I_T,J)^2  \; \inf_{x\in I_T} \mathcal M_1(f^1)(x)
\end{align*}
for each interval $J$ such that $|J|=|I_T|$.
\end{proof}

We also recall the following abstract result, which is Lemma 4.2 from \cite{MTT3}.

\begin{lemma}
\label{lemabssizeL1}
Let $\I$ be a finite collection of dyadic intervals, and let $(a_I)_{I\in\I}$ be some complex numbers. Define the \textit{maximal size} of $\I$ by
$$\size^{*}(\I):=\sup_{J\textrm{ interval}}\bigg(\frac{1}{|J|}\sum_{I\in\I\atop{I\subset J}}|a_I|^2\bigg)^{1/2}=\sup_{J\textrm{ interval}}\frac{1}{|J|^{1/2}} \; \Big\|\Big(\sum_{I\in\I\atop{I\subset J}}|a_I|^2\frac{1_{I}}{|I|}\Big)^{1/2}\Big\|_{L^{2}}.$$
Then
$$\size^{*}(\I)\sim \sup_{J\text{ interval}}\frac{1}{|J|} \; \Big\|\Big(\sum_{I\in\I\atop{I\subset J}}|a_I|^2\frac{1_{I}}{|I|}\Big)^{1/2}\Big\|_{L^{1,\infty}}.$$
\end{lemma}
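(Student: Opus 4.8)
The plan is to establish the two inequalities separately. Throughout, write
\[
F_J := \sum_{I \in \I,\ I \subseteq J} |a_I|^2\,\frac{1_I}{|I|},
\]
a nonnegative step function supported on $J$ with $\int F_J = \sum_{I\subseteq J}|a_I|^2$; thus $\size^{*}(\I) = \sup_J |J|^{-1/2}\,\|F_J^{1/2}\|_{L^2}$, while the quantity on the right-hand side of the asserted equivalence is $M := \sup_J |J|^{-1}\,\|F_J^{1/2}\|_{L^{1,\infty}}$. The bound $M \le \size^{*}(\I)$ is elementary: if $h$ is supported on a set of finite measure $m$, then $\lambda\,|\{|h|>\lambda\}| \le \min\!\big(\lambda m,\ \lambda^{-1}\|h\|_{L^2}^2\big)$, and the right side is maximized at $\lambda = \|h\|_{L^2} m^{-1/2}$, giving $\|h\|_{L^{1,\infty}} \le m^{1/2}\|h\|_{L^2}$; applying this with $h = F_J^{1/2}$ and $m = |J|$ yields $|J|^{-1}\|F_J^{1/2}\|_{L^{1,\infty}} \le |J|^{-1/2}\big(\int_J F_J\big)^{1/2} \le \size^{*}(\I)$, and one takes the supremum over $J$.

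The reverse inequality $\size^{*}(\I) \lesssim M$ is a John--Nirenberg-type self-improvement, which I would prove by a stopping-time argument. Since $\I$ is finite, $\mu := \size^{*}(\I)^2 = \sup_J |J|^{-1}\int_J F_J$ is attained; fix an interval $J$ realizing it. Choose a large absolute constant $K$, put $\lambda := K M^2$, and set $E := \{x : F_J(x) > \lambda\}$. Because $F_J$ is a superposition of indicators of dyadic intervals contained in $J$, the set $E$ is a union of maximal dyadic intervals $\{J_k\}$, each contained in $J$, and the defining weak-$L^1$ bound for $F_J$ (applied with the interval $J$) gives $|E| \le M|J|\lambda^{-1/2} = K^{-1/2}|J|$; in particular $E \subsetneq J$ as soon as $K > 1$. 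Now decompose $\int_J F_J = \int_{J\setminus E} F_J + \sum_k \int_{J_k} F_J$. On $J\setminus E$ we have $F_J \le \lambda$, so the first term is at most $\lambda|J|$. For fixed $k$ and $x \in J_k$, split $F_J(x) = \sum_{J_k \subsetneq I \subseteq J} \frac{|a_I|^2}{|I|} + \sum_{I \subseteq J_k,\ I \ni x} \frac{|a_I|^2}{|I|}$; the first sum is constant on $J_k$, and since the dyadic parent of $J_k$ is not contained in $E$ --- hence either contains a point at which $F_J \le \lambda$ (a point lying in every dyadic $I$ with $J_k \subsetneq I$), or else there is no dyadic $I$ with $J_k \subsetneq I \subseteq J$ at all --- this first sum is $\le \lambda$; integrating the second sum over $J_k$ gives $\sum_{I \subseteq J_k}|a_I|^2 \le \mu|J_k|$. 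Hence $\int_{J_k} F_J \le (\lambda+\mu)|J_k|$ and $\sum_k \int_{J_k} F_J \le (\lambda+\mu)|E| \le K^{-1/2}(\lambda+\mu)|J|$. Combining, $\mu|J| = \int_J F_J \le (K + K^{1/2}) M^2 |J| + K^{-1/2}\mu|J|$; taking $K$ with $K^{-1/2} \le \tfrac12$ and rearranging gives $\mu \le 2(K+K^{1/2}) M^2$, i.e.\ $\size^{*}(\I) \lesssim M$, as desired. (If the supremum defining $\mu$ is only nearly attained, run the same estimate with $\int_J F_J \ge (1-\epsilon)\mu|J|$ and let $\epsilon\to 0$.)

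The only step requiring genuine care is the stopping-time bookkeeping in the second part: verifying that $E \subsetneq J$, that each $J_k$ is a proper dyadic interval whose parent meets $\{F_J \le \lambda\}$ (or exhausts the ancestors available within $J$), and that on each $J_k$ the function $F_J$ cleanly splits into an ``ancestor part'' bounded by the level $\lambda$ and a ``descendant part'' whose $J_k$-average is $\le \size^{*}(\I)^2$. Once these dyadic combinatorics are in place the rest is routine, and I do not anticipate any serious obstacle.
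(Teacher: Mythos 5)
Your proof is correct. Note that the paper itself gives no proof of this lemma --- it simply quotes it as Lemma 4.2 of the cited Muscalu--Tao--Thiele ``biest'' paper --- so there is nothing internal to compare against; your argument (the trivial direction via $\|h\|_{L^{1,\infty}}\le |{\rm supp}\, h|^{1/2}\|h\|_{L^2}$, and the converse via a John--Nirenberg stopping-time decomposition at height $\lambda=KM^2$, splitting $F_J$ on each selected $J_k$ into a constant ancestor part bounded by $\lambda$ and a descendant part with average at most $\size^*(\I)^2$) is the standard proof and is essentially the one in that reference.
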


The following result is then an entirely standard corollary of Proposition \ref{netqw679679034or0-rjgnb}, Lemma \ref{lemabssizeL1} and the fact that
$$\Big\|\Big(\sum_{\vec{P}\in T}\frac{|\langle f^1,\psi_{\pv}\rangle|^21_{I_\pv}}{|I_\pv|}\Big)^{1/2}\Big\|_{1,\infty}\lesssim \|f^1\|_1.$$

\begin{corollary}
\label{est-sizebyM_1}
If $\Pv$ is any collection of multi-tiles, then
$$\size^*_{1}(\Pv)\lesssim \sup_{\vec{P}\in\Pv} \; \inf_{x\in I_{\pv}} \mathcal M_1f^1(x).$$
\end{corollary}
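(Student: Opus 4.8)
The plan is to deduce Corollary \ref{est-sizebyM_1} directly from the three ingredients cited just before it: the pointwise bound for $\size_1(T)$ in Proposition \ref{netqw679679034or0-rjgnb}, the weak-$L^1$ reformulation of maximal size in Lemma \ref{lemabssizeL1}, and the $L^{1,\infty}$ square-function bound displayed above the statement. The first step is to fix an arbitrary tree $T\subseteq\Pv$ and apply Proposition \ref{netqw679679034or0-rjgnb}; this reduces the estimate for $\size_1(T)$ to controlling, uniformly over admissible $\psi_{\pv}$, the quantity
\[
\frac{1}{|I_T|^{1/2}}\bigg\|\Big(\sum_{\vec P\in T}\frac{|\langle f^1,\psi_{\pv}\rangle|^2 1_{I_{\pv}}}{|I_{\pv}|}\Big)^{1/2}\bigg\|_2,
\]
together with the harmless term $\inf_{x\in I_T}\mathcal M_1(f^1)(x)$, which is already bounded by $\sup_{\vec P\in\Pv}\inf_{x\in I_{\pv}}\mathcal M_1 f^1(x)$ since $I_{\pv}\subseteq I_T$ for $\pv\in T$ (so that the infimum over the larger set $I_T$ is no larger) — wait, one must be slightly careful here, as $I_T$ need not be one of the $I_{\pv}$; but any $\pv\in T$ has $I_{\pv}\subseteq I_T$, hence $\inf_{x\in I_T}\mathcal M_1 f^1(x)\le \inf_{x\in I_{\pv}}\mathcal M_1 f^1(x)$, and picking any such $\pv$ gives the desired domination.

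The second step is to recognize the square-function term as precisely the object appearing in Lemma \ref{lemabssizeL1}. For fixed $\psi_{\pv}$, set $\I$ to be the collection of spatial intervals $\{I_{\pv}:\pv\in T\}$ (passing to a genuinely dyadic reindexing as needed, since the $I_{\pv}$ are dyadic) with coefficients $a_{I_{\pv}}=\langle f^1,\psi_{\pv}\rangle$. Then the $L^2$-based maximal size $\size^*(\I)$ bounds $\frac{1}{|I_T|^{1/2}}\|(\sum_{\pv\in T}|a_{I_{\pv}}|^2 1_{I_{\pv}}/|I_{\pv}|)^{1/2}\|_2$ by taking $J=I_T$ in its defining supremum. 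Lemma \ref{lemabssizeL1} then equates $\size^*(\I)$, up to constants, with the $L^{1,\infty}$-version
\[
\sup_{J}\frac{1}{|J|}\Big\|\Big(\sum_{\pv\in T:\,I_{\pv}\subseteq J}|a_{I_{\pv}}|^2\frac{1_{I_{\pv}}}{|I_{\pv}|}\Big)^{1/2}\Big\|_{1,\infty},
\]
and for each $J$ the displayed bound $\|(\sum_{\pv\in T}|\langle f^1,\psi_{\pv}\rangle|^2 1_{I_{\pv}}/|I_{\pv}|)^{1/2}\|_{1,\infty}\lesssim\|f^1\|_1$ applies — but now with $f^1$ replaced by its restriction to a neighborhood of $J$, i.e. one wants the localized statement $\lesssim \int_{CJ}|f^1|$ for a suitable dilate, so that dividing by $|J|$ yields $\lesssim \inf_{x\in J}\mathcal M_1 f^1(x)$.

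The main (and essentially only) obstacle is this localization: the raw square-function bound gives $\lesssim\|f^1\|_1$, but to conclude via the maximal function one needs the sharper $\lesssim\int_{CJ}|f^1|$ when the sum is restricted to $I_{\pv}\subseteq J$. This follows because each $\psi_{\pv}$ is $L^2$-adapted to $I_{\pv}$ with rapid (indeed, here order-one suffices after the usual bootstrapping in the proof of the cited square-function estimate) decay away from $I_{\pv}$: the contribution of $f^1$ outside $CJ$ to $\langle f^1,\psi_{\pv}\rangle$ for $I_{\pv}\subseteq J$ is negligible after summing in $\pv$, a standard tail estimate. Granting this, taking the supremum over $J$ and then over $\psi_{\pv}$ gives $\size_1(T)\lesssim \sup_{\pv\in T}\inf_{x\in I_{\pv}}\mathcal M_1 f^1(x)\le \sup_{\pv\in\Pv}\inf_{x\in I_{\pv}}\mathcal M_1 f^1(x)$, and since $T\subseteq\Pv$ was arbitrary, taking the supremum over all trees $T\subseteq\Pv$ in the definition \eqref{size-max-def} of $\size^*_1(\Pv)$ completes the proof. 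The only mildly delicate bookkeeping is ensuring the dyadic reindexing in Lemma \ref{lemabssizeL1} is consistent and that the adaptation/mean-zero hypotheses on $\psi_{\pv}$ survive the various dilations, both of which are routine.
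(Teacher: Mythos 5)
Your argument is correct and is exactly the route the paper intends: it combines Proposition \ref{netqw679679034or0-rjgnb}, Lemma \ref{lemabssizeL1}, and the weak-$L^1$ square-function bound in precisely the way the paper's one-line ``standard corollary'' remark indicates, and you correctly isolate the only nontrivial point (localizing $\|f^1\|_1$ to $\frac{1}{|J|}\int_{CJ}|f^1|$ via the spatial decay of the $\psi_{\pv}$ so that the maximal function appears). No gaps beyond the routine bookkeeping you already flag.
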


As is quite common, we will eventually excise certain ``exceptional sets'' $\Omega \subset \R$ in an appeal to generalized restricted type interpolation theory; the following result will provide size decay estimates for collections of multi-tiles with spatial intervals contained in the exceptional set.

\begin{proposition}
\label{size-exc}
Let  $\Omega\subset \R$, and let $f^3\in L^2(\R)$ be such that $|f^3|\lesssim 1_{\R\setminus\Omega}$.
For each  $l\ge 1$, let $\Pv_{1,l}$ consist of all the multi-tiles in $\Pv_1$ such that
$$4^{l}I_{\pv} \cap(\R\setminus\Omega)\not=\emptyset$$
while
$$4^{l-1}I_{\pv}\subset\Omega.$$
Then
$$\size^*_{3}(\Pv_{1,l})\lesssim_N 2^{-c(N)l},$$
where $$\lim_{N\to\infty}c(N)= \infty$$ and $N$ is the spatial parameter introduced earlier.
\end{proposition}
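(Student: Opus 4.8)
The plan is to unwind the definition of $\size_3^*$ and reduce to a pointwise bound for the relevant multiplier operators applied to $f^3$, exploiting the fact that $f^3$ is supported off $\Omega$ while the spatial intervals $I_{\pv}$ of multi-tiles in $\Pv_{1,l}$ are deeply nested inside $\Omega$. Fix a tree $(T,\xi,I)$ with $T \subseteq \Pv_{1,l}$. We must bound both terms in $\size_3(T,f^3)$, namely the $\ell^2$-square-sum over $\pv \in T$ of $\|f^3\|_{P_3,(\xi_T)_3}^2$, and the ``top'' term $|I_T|^{-1/2}\sup_{m_{3,T}}\|\tilde\chi_{I_T}^{10}\pi_{m_{3,T}}(f^3)\|_2$. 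In each case the operator being applied to $f^3$ has a symbol adapted to $10\,\omega_{P_3}$ (respectively $10\,\omega_{3,T}$), so its kernel is, up to the good constants of adaptation, bounded by $|\omega_{P_3}|\,\tilde\chi_{I_\pv^{*}}^{M}$ where $I_\pv^{*}$ is the interval dual to $\omega_{P_3}$ and $M$ can be taken as large as $N^2$ at the cost of the implicit constant. Since $|\omega_{P_3}||I_{\pv}| = 1 + s_\mu \approx |\omega_{P_1}||I_{\pv}|\cdot(1+s_\mu)$, the relevant dual interval is comparable to $I_{\pv}$ up to the dilation factor $1+s_\mu \geq 1$; crucially, $I_\pv^* \subseteq C I_{\pv}$ for a universal $C$, so one loses nothing by working with $I_{\pv}$ itself.

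The heart of the matter is then the following elementary estimate: if $J$ is an interval with $4^{l-1}J \subseteq \Omega$ and $h$ is supported on $\R\setminus\Omega$ with $|h|\leq 1$, then for any $x \in J$,
\[
\big|\big(h * \Psi_J\big)(x)\big| \lesssim_M \int_{\R\setminus\Omega} \frac{|J|^{-1}}{(1 + |x-y|/|J|)^M}\,\mathrm dy \lesssim_M 4^{-(l-1)(M-1)},
\]
since the domain of integration is disjoint from $4^{l-1}J$ and hence every $y$ there satisfies $|x-y| \gtrsim 4^{l-1}|J|$; here $\Psi_J$ denotes any $L^1$-normalized bump $L^\infty$-adapted of order $M$ to $J$. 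Applying this with $J = I_{\pv}$ and $M \sim N^2$ gives $\|\tilde\chi_{I_\pv}^{10}\,\pi_{m_{P_3}}(f^3)\|_2 \lesssim_N |I_{\pv}|^{1/2}\,2^{-c(N)l}$ uniformly over admissible symbols $m_{P_3}$, where $c(N) \approx N^2$, so $\|f^3\|_{P_3,(\xi_T)_3} \lesssim_N |I_{\pv}|^{1/2}\,2^{-c(N)l}$. Summing the squares over $\pv \in T$ and using $\sum_{\pv\in T}|I_{\pv}| \lesssim |I_T|$ (which follows from the tree structure together with the bounded-overlap properties established in \cite{MTT}) bounds the first term of $\size_3(T)$ by $2^{-c(N)l}$. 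The top term is handled identically, now with $J = I_T$: since $T \subseteq \Pv_{1,l}$, every $\pv \in T$ has $4^{l-1}I_{\pv}\subseteq\Omega$, but one must check that $4^{l-1}I_T\subseteq\Omega$ as well — this is not automatic, so instead one argues directly that $\tilde\chi_{I_T}^{10}$ is, up to acceptable loss, controlled by a superposition of $\tilde\chi_{I_\pv}$ over a partition of $I_T$ into intervals of size $\leq |I_T|$ for which the deep-nesting hypothesis does hold (e.g. using a maximal $\pv$ or a Whitney decomposition of $I_T$ relative to $\Omega$), or one simply absorbs this into the observation that the support condition $|f^3|\lesssim 1_{\R\setminus\Omega}$ already localizes the integral away from $I_T$ once $I_T$ meets $\R\setminus\Omega$ only after dilation by $4^l$.

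The main obstacle I anticipate is bookkeeping the interaction between the dilation factor $1+s_\mu$ (which can be arbitrarily large) and the decay: one needs the decay of $\Xi$ and of the adapted symbols to be in terms of $|I_{\pv}|$ and not $|\omega_{P_3}|^{-1} = |I_{\pv}|/(1+s_\mu)$, since only the former is tied to the geometry of $\Omega$ via the hypothesis $4^{l-1}I_{\pv}\subseteq\Omega$. This is resolved by noting that the kernel of $\pi_{m_{P_3}}$, while nominally concentrated at scale $|\omega_{P_3}|^{-1}$, still decays rapidly past scale $|I_{\pv}|$ (indeed faster, since $|\omega_{P_3}|^{-1} \leq |I_{\pv}|$), so the displayed estimate applies verbatim with $M$ slightly reduced. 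A secondary technical point is the passage from the tree-top scale $|I_T|$ to the multi-tile scales $|I_{\pv}|$ in the first term, which is where the regularity/Whitney structure of \cite{MTT} enters to guarantee $\sum_{\pv\in T}|I_{\pv}|\lesssim|I_T|$; this is standard and I would invoke it rather than reprove it. Finally, one takes the supremum over all trees $T\subseteq\Pv_{1,l}$ and over all admissible $m_{i,T}$ to conclude $\size_3^*(\Pv_{1,l})\lesssim_N 2^{-c(N)l}$ with $c(N)\to\infty$, as claimed. \endprf
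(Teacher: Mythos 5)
Your overall strategy coincides with the paper's: both proofs reduce matters to the per-tile estimate $\|\tilde\chi_{I_\pv}^{10}\pi_{m_{P_3}}(f^3)\|_2^2\lesssim 2^{-c(N)l}|I_\pv|$, obtained by exploiting that $f^3$ lives outside $4^{l-1}I_\pv$ while the kernel of $\pi_{m_{P_3}}$ is an $L^1$-normalized bump at scale $|\omega_{P_3}|^{-1}=|I_\pv|/(1+s_\mu)\le|I_\pv|$. The paper runs this through the wave-packet expansion of Proposition \ref{netqw679679034or0-rjgnb} (splitting the dyadic intervals $J$ into those inside $2^{l-1}I_\pv$, where $\langle f^3,\phi_J^{(1)}\rangle$ is small, and those far from $I_\pv$, where the weight decays), whereas you phrase it as a direct convolution-kernel estimate; these are the same computation, and your resolution of the $1+s_\mu$ bookkeeping is correct. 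Two minor points: your pointwise bound is stated only for $x\in I_\pv$, so you still must handle the tails $x\notin 4^{l-2}I_\pv$ by playing the weight $\tilde\chi_{I_\pv}^{10}$ against the trivial $L^\infty$ bound on $\pi_{m_{P_3}}f^3$; and your (admittedly vague) discussion of the tree-top term at least flags an issue that the paper's proof passes over in silence.

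There is, however, one genuine flaw: your justification of $\sum_{\pv\in T}|I_\pv|\lesssim|I_T|$ as following ``from the tree structure together with the bounded-overlap properties established in \cite{MTT}'' is incorrect. For a general tree this inequality fails badly --- a tree may contain, at each of arbitrarily many scales, a family of multi-tiles whose spatial intervals tile $I_T$, so that $\sum_{\pv\in T}|I_\pv|$ is of order $(\#\,\mathrm{scales})\cdot|I_T|$; indeed, if the inequality held for all trees, the single-tree estimate would be a triviality. The bound is true here only because $T\subseteq\Pv_{1,l}$: if $I_\pv\subsetneq I_{\pv'}$ with both multi-tiles in $\Pv_{1,l}$, then distinct spatial scales differ by the factor $2^{\K}$, so $4^{l}I_\pv\subseteq 4^{l-1}I_{\pv'}\subseteq\Omega$, contradicting $4^{l}I_\pv\cap(\R\setminus\Omega)\neq\emptyset$; hence the intervals $I_\pv$, $\pv\in T$, are (essentially) pairwise disjoint. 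This is exactly the observation with which the paper closes its proof, and it is the one place where the defining conditions of $\Pv_{1,l}$ enter beyond the per-tile decay; without invoking them at this step your argument controls each summand but not the sum.
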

\begin{proof}

Let $T\in \Pv_{1,l}$ be a tree.  For each $\pv\in T$, write as in the proof of Proposition \ref{netqw679679034or0-rjgnb}
\begin{align*}
\big\| \tilde \chi_{I_\pv}^{10} \pi_{m_{P_3}}(f^3) \big\|_2^2 &\lesssim \sum_{J\text{ dyadic}: \; |J|=\frac{|I_\pv|}{1+s_\mu}}\Big(\frac{1+\dist(J,I_{\pv})}{|I_{\pv}|}\Big)^{-b(N)} \; \big|\langle f^3,\phi_J^{(1)}\rangle\big|^2\\
 &\lesssim 2^{-c(N) \, l}|I_{\pv}|,
\end{align*}
where $b(N),c(N)\to\infty$.
To see the last inequality, note that for the intervals $J\subset 2^{l-1}I_{\pv}$ the term $\big|\langle f^3,\phi_J^{(1)}\rangle\big|$ is small due to spatial support considerations. Also, if $J$ and $I_{\pv}$ are far apart then the term $\big(\frac{1+\dist(J,I_{\pv})}{|I_{\pv}|}\big)^{-b(N)}$ is small.

Finally, note that the spatial intervals $I_{\pv}$, $\pv\in T$, are pairwise disjoint. The result now follows immediately.
\end{proof}

Finally, we will need to recall the following result, which is Proposition 6.3 of \cite{MTT}.
\begin{proposition}
For each $i=1,2,3$ and each $f^i$ we have
$$\size^*_{i}(\Pv_{1})\lesssim \|f^i\|_{\infty}.$$
\end{proposition}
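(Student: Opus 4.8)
The last stated result is the $L^\infty$ size bound $\size^*_i(\Pv_1) \lesssim \|f^i\|_\infty$, which is just the transcription of Proposition 6.3 of \cite{MTT} to the present notation; the plan is to reduce it entirely to the corresponding statement in \cite{MTT}, paying attention only to the two points where our setup diverges from theirs: the roles of the components of ${\bf v}_\mu$ are permuted (our first component, rather than the third, is the shortest and normalized to $1$), and our $\size_i$ is the slightly smaller variant described in the footnote to Definition \ref{size-def}.  Since a smaller size can only make the inequality easier, the second difference is harmless, so the real content is invariance under the permutation of components.

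The first step is to fix $i$ and a tree $(T,\xi,I)$ with $T\subseteq \Pv_1^\mu$, and to unwind the definition of $\size_i(T,f^i)$ into the two pieces in \eqref{size-nonmax-def}: the $\ell^2$-aggregate $\big(|I_T|^{-1}\sum_{\pv\in T}\|f^i\|_{P_i,(\xi_T)_i}^2\big)^{1/2}$ and the single ``top'' term $|I_T|^{-1/2}\sup_{m_{i,T}}\|\tilde\chi_{I_T}^{10}\pi_{m_{i,T}}(f^i)\|_2$.  For the top term one uses the localization $\tilde\chi_{I_T}^{10}$ against the smoothing window $\Xi_j$: writing $\pi_{m_{i,T}}f^i$ as a superposition over dyadic $J$ with $|J|=|\omega_{i,T}|^{-1}$ of wave packets adapted to $J$ with the mean-zero/derivative bounds \eqref{mpidfnier674r4op[r4;-est-tree}--\eqref{mpi-est-tree}, exactly as in the proof of Proposition \ref{netqw679679034or0-rjgnb}, the quasi-orthogonality gives $\|\tilde\chi_{I_T}^{10}\pi_{m_{i,T}}f^i\|_2^2 \lesssim \sum_J (1+\dist(J,I_T)/|I_T|)^{-b(N)}|\langle f^i,\phi_J\rangle|^2$, and each $|\langle f^i,\phi_J\rangle|$ is controlled by $\|f^i\|_\infty$ times the $L^1$-mass of $\phi_J$, which is $\approx |J|^{1/2}\approx (|I_T|\,|\omega_{i,T}|\cdot|I_T|^{-1})^{1/2}$.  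Here one must simply check that the ratio $|\omega_{i,T}|\,|I_T|$ is bounded above and below by absolute constants (via the Whitney conditions \eqref{whitney1first}--\eqref{whitney2first} and the definition of $\omega_{i,\xi,I}$, bearing in mind the extra factors $s_\mu$ and $1+s_\mu$ for $i=2,3$), so that after summing the decaying weight in $J$ one gets $|I_T|^{-1/2}\|\tilde\chi_{I_T}^{10}\pi_{m_{i,T}}f^i\|_2 \lesssim \|f^i\|_\infty$ uniformly in $\mu$.

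The second step handles the $\ell^2$-aggregate over $\pv\in T$ by the same wave-packet decomposition applied to each $\pi_{m_{P_i}}f^i$ at scale $|I_\pv|$; using that the spatial intervals $\{I_\pv : \pv\in T\}$ at a fixed scale are disjoint and that across scales the tree structure gives the familiar $\sum_{\pv\in T}|I_\pv|\cdot(\text{scale-separation weights})\lesssim |I_T|$, one arrives again at $\sum_{\pv\in T}\|f^i\|_{P_i,(\xi_T)_i}^2 \lesssim \|f^i\|_\infty^2\,|I_T|$.  Taking the supremum over all trees $T\subseteq\Pv_1^\mu$ and then over $\mu$ gives the claim.  I expect the only genuine obstacle to be bookkeeping rather than anything conceptual: one must verify that none of the constants produced depend on $\mu$ (equivalently, on $s_\mu$), which comes down to checking that the permutation of the components of ${\bf v}_\mu$ — our normalization $|\omega_{P_1}| = |I_\pv|^{-1}$ versus the $|\omega_{P_3}| = |I_\pv|^{-1}$ of \cite{MTT} — does not break any of the scale identities used in the wave-packet estimates, and that the smoothed cutoffs $\chi_{I_\pv,j_\pv}$ (which replace the sharp $\chi_{I_\pv}$ of \cite{MTT}) introduce only the harmless rapidly-decaying tails already accounted for by the $\tilde\chi_{I_\pv}^{10}$ weights in \eqref{fips}.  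Modulo this verification, the statement is exactly Proposition 6.3 of \cite{MTT}.
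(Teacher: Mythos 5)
Your proposal matches the paper in the only sense that matters: the paper offers no proof of this proposition at all, simply quoting it as Proposition 6.3 of \cite{MTT}, and the footnote to Definition \ref{size-def} already records that the smaller variant of size used here only makes the estimates of \cite{MTT} easier to verify. So your reduction to \cite{MTT}, with the two points of divergence (permuted components of ${\bf v}_\mu$, smaller size) singled out for checking, is exactly what the authors intend. Two corrections to your supplementary sketch, neither fatal to the bottom line. First, the ratio $|\omega_{i,T}|\,|I_T|$ is \emph{not} bounded by absolute constants for $i=2,3$: it equals $s_\mu$, respectively $1+s_\mu$, which is unbounded in $\mu$. The top-term estimate survives anyway, because the intervals $J$ of length $|\omega_{i,T}|^{-1}$ tile $\R$, so $\sum_J\big(1+\dist(J,I_T)/|I_T|\big)^{-b(N)}|J|\approx|I_T|$ no matter how small $|J|$ is compared to $|I_T|$; the claimed boundedness of the ratio is both false and unnecessary. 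Second, in the $\ell^2$-aggregate the naive bound $\|f^i\|_{P_i,(\xi_T)_i}^2\lesssim\|f^i\|_\infty^2\,|I_\pv|$ cannot simply be summed, since $\sum_{\pv\in T}|I_\pv|$ exceeds $|I_T|$ by a factor of the number of scales present in the tree. The ``scale-separation weights'' you allude to are not a consequence of the tree structure alone; they come specifically from the vanishing condition \eqref{mpi2yfgrygf340904-est} of the multipliers $m_{P_i}$ at $(\xi_T)_i$, which yields a gain of the form $(|\omega_{i,T}|/|\omega_{P_i}|)$ per multi-tile and makes the sum over scales geometrically convergent. This is the actual content of the Bessel-type argument behind Proposition 6.3 of \cite{MTT} and should be named explicitly rather than left as a familiar-looking placeholder.
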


\section{Proof of Theorem \ref{main1}}

Let $E_1,E_2,E_3\subset \R$ be sets of finite measure. Because the claimed ranges of $p_2$ and $p_3$ are identical, we may assume without loss of generality that $|E_2|\le |E_3|$. By rescaling, we can also assume $|E_3|=1$.

Define the ``exceptional set''
$$\Omega:=\{x : \mathcal M_1(1_{E_1})>100|E_1|\},$$ and set
$$E_3':=E_3\setminus\Omega.$$
Note that $|E_3'|>\frac{|E_3|}{2}$, and thus via restricted type interpolation theory (see \cite{MTT4}) it suffices to prove that
$$\sum_{\mu} \Big|\sum_{\pv \in \Pv_1^{\mu}}
\int \chi_{I_\pv,j_\pv} \; {\pi}_{\omega_{P_1}} (M_{\xi_\mu}f^1_\mu) \; {\pi}_{\omega_{P_2}}(M_{\eta_\mu}f^2_\mu) \; {\pi}_{\omega_{P_3}}(M_{\theta_\mu}f^3_\mu) \Big| \: \lesssim \: |E_1|^{1/p_1}|E_2|^{1/p_2}$$
for each $|f^1|\le \chi_{E_1}$, $|f^2|\le \chi_{E_2}$, $|f^3|\le \chi_{E_3'}$.

Note that $M_{\xi_\mu}f^1_\mu$ is supported in frequency in the interval $J^1_\mu:=I_\mu^1-\xi_\mu$ containing the origin, and analogous frequency support properties hold for the other two components.  Recall the vector $\textbf{v}_\mu = (\textbf{v}_\mu^1 := 1, \textbf{v}_\mu^2:=s_\mu, \textbf{v}_\mu^3 :=-1-s_\mu)$; note that the frequency supports $J_\mu^i$ of the functions $M_{\xi_\mu}f^1_\mu$, $M_{\eta_\mu}f^2_\mu$, and $M_{\theta_\mu}{f}^3_\mu$ satisfy $|J_\mu^i|=K_\mu|\textbf{v}_\mu^i|$ for some unimportant constant $K_\mu$. This, together with the Whitney property \eqref{whitney1first} and the fact that the $J_\mu^i$ contain the origin, implies that in order for the product $$\pi_{\omega_{P_1}}(M_{\xi_\mu}f^1_\mu) \; \pi_{\omega_{P_2}}(M_{\eta_\mu}f^2_\mu) \; \pi_{\omega_{P_3}}(M_{\theta_\mu}f^3_\mu)$$ to be nonzero, one needs
\begin{equation}
\label{e12997gt}
|\omega_{P_i}|\lesssim_{C_0}|I_{\mu}^i|.
\end{equation}
This can indeed be seen easily by comparing $L_\mu^{-1}(\omega_{P_1}\times \omega_{P_2}\times\omega_{P_3})$ with $L_\mu^{-1}(J_\mu^1\times J_\mu^2\times J_\mu^3)$.  We can thus replace the collection $\Pv_1^{\mu}$ with the subcollection of multi-tiles $\pv$ satisfying \eqref{e12997gt} and such that $\omega_{P_i} \cap J_\mu^i \neq \emptyset$; we will continue to denote this collection by $\Pv_1^{\mu}$.

Let $\pi_\mu^1$ be the multiplier operator associated with the multiplier $\varphi_\mu^1(\xi+\xi_\mu)$, where we recall that $\widehat{f_{\mu}^i}:= \widehat{f^i}\varphi_\mu^i$.  Note that for each $f^1$ we have
\begin{equation}\label{samin3ways}
\pi_{\omega_{P_1}}(M_{\xi_\mu}f^1_{\mu})=\pi_{\omega_{P_1}}\pi_{\mu}^1(M_{\xi_\mu}f^1).
\end{equation}
Due to \eqref{e12997gt}, if $\vec{P}\in \Pv_1^{\mu}$, then $\pi_{\omega_{P_1}}\pi_{\mu}^1$ has the same essential properties as $\pi_{\omega_{P_1}}$, in that its multiplier is also adapted to $\omega_{P_1}$ (with a new but still universal constant).  An analogous observation applies to the second and third components.  Thus, the upshot of this discussion is that, for each $\mu$, the essential effect of restricting the $f^i$ in frequency to $I_\mu^i$ is simply a restriction on the collection of multi-tiles under consideration in the ``model sum.''

For each $\mu$ and for each $l\ge 1$, let $\Pv_{1,l}^{\mu}$ consist of all the tiles in $\Pv_1^{\mu}$ such that
$$4^{l}I_P\cap(\R\setminus\Omega)\not=\emptyset$$
and
$$4^{l-1}I_P\subset\Omega.$$
Also define $\Pv_{1,0}^{\mu}$ to consist of all the tiles in $\Pv_1^{\mu}$ such that
$$I_P\cap(\R\setminus\Omega)\not=\emptyset;$$
note that $(\Pv_{1,l}^{\mu})_{l\ge 0}$ forms a partition of $\Pv_{1}^{\mu}$.

Recall that each $\Pv_1^{\mu}$ is regular, and note that  $\Pv_{1,0}^{\mu}$ remains regular. While the $\Pv_{1,l}^{\mu}$ are not necessarily regular for $l\ge 1$, the property will not really be needed in this case; the combinatorics as much simpler for $l\ge 1$, since each tree in this context will turn out to consist only of one multi-tile.

So let us first treat the more difficult case of $l=0$.  We know from Section \ref{sec:size} that
$$\size_{1}^*(\Pv_{1,0}^{\mu})\lesssim |E_1|,$$
$$\size_{2}^*(\Pv_{1,0}^{\mu})\lesssim 1,$$
and
$$\size_{3}^*(\Pv_{1,0}^{\mu})\lesssim 1,$$
where the three sizes are computed with respect to the functions
$M_{\xi_\mu}f^1_\mu$, $M_{\eta_\mu}f^2_\mu$, and $M_{\theta_\mu}f^3_\mu$, respectively. Indeed, in view of \eqref{samin3ways} and the observation following it,
we easily get that
$$\size_{1}^*(\Pv_{1,0}^{\mu},M_{\xi_\mu}f^1_\mu)\approx \size_{1}^*(\Pv_{1,0}^{\mu},M_{\xi_\mu}f^1),$$
and similarly for $\size_{2}^*$ and $\size_{3}^*$.

Now, by iterating Proposition 9.2 of \cite{MTT} as in the proof of Corollary 9.3 (\textit{ibid.}),  we can decompose $\Pv_{1,0}^{\mu}$ in three ways as
\begin{align*}
\Pv_{1,0}^{\mu} &=\bigcup_{2^{-n_1}\lesssim |E_1|} \; \bigcup_{T\in\T_{n_1,\mu}^1}T,\\
\Pv_{1,0}^{\mu} &=\bigcup_{\, \, \; 2^{-n_2}\lesssim 1 \, \, \;} \; \bigcup_{T\in\T_{n_2,\mu}^2}T,\\
\Pv_{1,0}^{\mu} &=\bigcup_{\, \, \; 2^{-n_3}\lesssim 1 \, \, \;} \; \bigcup_{T\in\T_{n_3,\mu}^3}T.
\end{align*}
The collections $\T_{n_j,\mu}^j$ are referred to as ``forests''; their specific construction is detailed in Sections 9 and 10 of \cite{MTT}.  In particular, they enjoy the maximal $j$-size bounds
\begin{equation}\label{forestsizebound}
\size_{j}^*\big(\T_{n_j,\mu}^j\big) \lesssim 2^{-n_j}.
\end{equation}
Additionally, we note that each of the trees $T$ appearing in the above decompositions is regular, as guaranteed by the greedy selection process in \cite{MTT} and by the regularity of the original collection $\Pv_{1,0}^{\mu}$. We also have the ``local''\footnote{``Locality'' here is understood with respect to the parameter $\mu$.} Bessel inequalities
$$\sum_{T\in \T_{n_1,\mu}^1}|I_T|\lesssim 2^{2n_1}\|f_\mu^1\|_2^2,$$
$$\sum_{T\in \T_{n_2,\mu}^2}|I_T|\lesssim 2^{2n_2}\|f_\mu^2\|_2^2,$$
$$\sum_{T\in \T_{n_3,\mu}^3}|I_T|\lesssim 2^{2n_3}\|f_\mu^3\|_2^2.$$

Now, the key place where the disjointness of the intervals $I_\mu^i$ is exploited is naturally in the fact that the functions $f_\mu^i$ are pairwise orthogonal, for any fixed $i$. Thus, if we define
$$\T_{n_i}^i := \bigcup_{\mu}\T_{n_i,\mu}^i \;,$$
we  obtain the crucial ``global'' Bessel inequality
\begin{equation}
\label{globalbessel}
\sum_{T\in \T_{n_i}^i}|I_T|\lesssim 2^{2n_i}|E_i|.
\end{equation}

Let $S_{n_i,\mu}^i$ be the collection of all the multi-tiles from all the trees in the forest $\T_{n_i,\mu}^i$, and define $$S_{n_1,n_2,n_3,\mu}=\bigcap_{i=1}^3S_{n_i,\mu}^i.$$  For each  $T\in \T_{n_i,\mu}^i$, define $$T':=T\bigcap_{j\in\{1,2,3\}\setminus \{i\}} S_{n_j,\mu}^j = T \cap S_{n_1,n_2,n_3,\mu}$$
and note that, for each $i$, these collections $T'$ partition the set $\S_{n_1,n_2,n_3,\mu}$.

Now choose $0<\gamma_2,\gamma_3<1/2$ such that $\gamma_2+\gamma_3=\frac1p_1$ and $\gamma_2>\frac1{p_2}$. Next, choose $1>\theta_2>2\gamma_2$ and  $1>\theta_3>2\gamma_3$, and set $\theta_1=1$.  By invoking Lemma \ref{lema3:10} for the $T'$ and applying (\ref{forestsizebound}) and (\ref{globalbessel}),
we obtain for each $i$
\begin{align*}
&\sum_\mu \Big| \sum_{\pv \in \S_{n_1,n_2,n_3,\mu}}
\int \chi_{I_\pv,j_\pv} \; {\pi}_{\omega_{P_1}}(M_{\xi_\mu}f^1_\mu)\; {\pi}_{\omega_{P_2}}(M_{\eta_\mu}f^2_\mu) \; {\pi}_{\omega_{P_3}}(M_{\theta_\mu}f^3_\mu) \; \Big|\\
\lesssim \; &\sum_\mu\sum_{T\in \T_{n_i,\mu}^i}\; \Big |\sum_{\pv \in T'}
\int \chi_{I_\pv,j_\pv}  \; {\pi}_{\omega_{P_1}}(M_{\xi_\mu}f^1_\mu) \; {\pi}_{\omega_{P_2}}(M_{\eta_\mu}f^2_\mu) \; {\pi}_{\omega_{P_3}}(M_{\theta_\mu}f^3_\mu) \; \Big|\\
\lesssim \: \: &2^{-n_1} \; 2^{-n_2\theta_2} \; 2^{-n_3\theta_3} \; 2^{2n_i}|E_i|.
\end{align*}
By taking a geometric mean, the above is further bounded by $$2^{-n_1} \; 2^{-n_2\theta_2} \; 2^{-n_3\theta_3} \; (2^{2n_1}|E_1|)^{1-\frac1{p_1}} \; (2^{2n_2}|E_2|)^{\gamma_2} \; (2^{2n_3}|E_3|)^{\gamma_3}.$$  Finally, note that for each $\mu$ the sets $S_{n_1,n_2,n_3,\mu}$ form a partition of $\Pv_{1,0}^{\mu}$, and thus
\begin{align*}
 & \phantom{\lesssim_{p_1,p_2,p_3}} \:\: \sum_{\mu} \; \Big| \sum_{\pv \in \Pv_{1,0}^{\mu}}
\int \chi_{I_\pv,j_\pv} \; {\pi}_{\omega_{P_1}}(M_{\xi_\mu}f^1_\mu) \; {\pi}_{\omega_{P_2}}(M_{\eta_\mu}f^2_\mu) \; {\pi}_{\omega_{P_3}}(M_{\theta_\mu}f^3_\mu) \; \Big|\\
&\lesssim\phantom{_{p_1,p_2,p_3}} \sum_{2^{-n_1}\lesssim |E_1|} \; \sum_{2^{-n_2}\lesssim 1} \; \sum_{2^{-n_3}\lesssim
1} \; 2^{-n_1} \; 2^{-n_2\theta_2} \; 2^{-n_3\theta_3} \; (2^{2n_1}|E_1|)^{1-\frac1{p_1}} \; (2^{2n_2}|E_2|)^{\gamma_2} \; 2^{2n_3\gamma_3}\\
&\lesssim_{p_1,p_2,p_3} \;|E_1|^{1/p_1}|E_2|^{1/p_2},
\end{align*}
as desired.

Now for $l \geq 1$ the corresponding sums over the multi-tiles in $\Pv_{1,l}^{\mu}$ will have an extra geometric decay in $l$, due to Proposition \ref{size-exc}. More precisely, via Corollary \ref{est-sizebyM_1}
and the classical estimate
\begin{equation}
\label{hdfuhuferu48990r746r83uy}
\inf_{x\in I} \mathcal M_1(f)(x)\lesssim 8^l\inf_{x\in 4^lI} \mathcal M_1(f)(x),
\end{equation}
we obtain
$$\size_{1}^*(\Pv_{1,l}^{\mu})\lesssim 8^l|E_1|.$$
If $N$ is large enough, this will be compensated by
$$\size_{3}^*(\Pv_{1,l}^{\mu})\lesssim 2^{-c(N)l}.$$
We will continue to be happy with
$$\size_{2}^*(\Pv_{1,l}^{\mu})\lesssim 1.$$

Running the tree selection process as in \cite{MTT}, all the selected trees from $\Pv_{1,l}^{\mu}$ will now turn out to consist of just one multi-tile; they are thus regular, and Proposition \ref{tree-est-prop} applies.  What allows us to work only with singleton trees is the fact that the multi-tiles in $\Pv_{1,l}^{\mu}$ are automatically ``strongly disjoint'' in the sense of Lemma 4.16 of \cite{MTT}, since  spatial intervals of such multi-tiles with distinct scales are pairwise disjoint. The details are then the same as in the case $l=0$.

\section{Proof of Theorem \ref{main2}}

We can assume that the lines $l_\mu$ intersect the rectangles $I^{1}_\mu\times I_\mu^2$. Indeed, if  the rectangle and the line do not intersect for some $\mu$, then one can easily check that $$\Lambda_\mu(f^1,f^2,f^3)=c_\mu\int f^1_\mu(x) f^2_\mu(x){f}^3_\mu(x)dx,$$
for some $c_\mu=O(1)$. The boundedness of $\sum_\mu|\Lambda_\mu|$, where the sum is taken over the $\mu$ for which intersection fails, follows via a much simpler version of the main argument. For each such $\mu$, all trees will have the same frequency component; they will only differ spatially.

Under the assumption that $l_\mu$ intersects the rectangle $I^{1}_\mu\times I_\mu^2$, for each $\mu$ we can choose $(\xi_\mu,\eta_\mu,\theta_\mu)\in l_\mu^3$ such that in addition $\xi_\mu\in I_{1}^\mu$,  $\eta_\mu\in I_{2}^\mu$ and $\theta_\mu\in I_{3}^\mu$.

Define
$$\Lambda_\mu^{*}(f^1,f^2,f^3)=\int H_{s_\mu}(f^1,f^2){f}^3\; ;$$
then
$$\Lambda_\mu(f^1,f^2,f^3)=\Lambda_\mu^*\big(M_{\xi_\mu}f^1_\mu \, , \, M_{\eta_\mu}f^2_\mu \, , \, M_{\theta_\mu}{f}^3_\mu\big).$$
In the first three sections of \cite{MTT} it is proved that $|\Lambda_\mu^{*}(f^1,f^2,f^3)|$ is bounded from above by a sum of boundedly many model sums of the form
\begin{equation}
\Big|\sum_{\pv \in \Pv_1^{\mu}}
\int \chi_{I_\pv,j_\pv} \prod_{i=1}^3 \pi_{\omega_{P_i}} f^i\Big|.
\end{equation}
Theorem \ref{main2} now follows immediately from Theorem \ref{main1}.

\section{Proof of Theorem \ref{main3}}

We will decompose $P_{\operatorname{lac}}$ similarly to the decomposition of the disc in \cite{GL}, with a few simplifications along the way. Our decomposition is motivated by the following principle.

\begin{proposition}
\label{partitionunity}
Let $\Omega\subset\R^2$ be an open set, and let $0<\alpha<1$, $M_1,M_2>0$. Let
$\mathcal C$ be a collection of closed rectangles $R=I_R\times J_R$ such that:
\begin{enumerate}

\item $ R \subset\Omega$ for each $R\in\mathcal C$.

\item $\Omega=\bigcup_{R\in \mathcal C}\alpha R$.

\item For  each  $(x,y)\in\Omega$ there are at most $M_1$  rectangles $R$ containing $(x,y)$.

\item If $R$ and $R'$ intersect, then
$$\frac1{M_2}<\frac{|I_R|}{|I_{R'}|},\frac{|J_R|}{|J_{R'}|}<M_2.$$
\end{enumerate}

Then there exist  functions $\psi_R$ adapted to and supported on $R$ such that $$\sum_{R\in \mathcal C} \psi_R=\chi_\Omega.$$
The adaptation constant will only depend on $\alpha$, $M_1$ and $M_2$.
\end{proposition}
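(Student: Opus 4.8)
The plan is to build the partition of unity $\psi_R$ by a standard "smooth partition subordinate to a cover" argument, but carried out carefully so that the resulting functions are genuinely adapted (in the sense of the \emph{Definition} above) to their rectangles, with constants depending only on $\alpha$, $M_1$, $M_2$. The key input is hypothesis (4), which says that among rectangles meeting a given $R$, the side lengths are comparable up to $M_2$; this is precisely what prevents the usual problem that summing bump functions over a cover can destroy the derivative bounds.

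First I would, for each $R = I_R \times J_R \in \mathcal C$, fix a tensor bump $\varphi_R(x,y) = \beta\big(\tfrac{x - c(I_R)}{|I_R|}\big)\,\beta\big(\tfrac{y - c(J_R)}{|J_R|}\big)$, where $\beta$ is a fixed smooth function with $\beta \equiv 1$ on $[-\alpha/2,\alpha/2]$, $\supp\beta \subset [-1/2,1/2]$, and $0 \le \beta \le 1$. Then $\varphi_R$ is supported on $R$, identically $1$ on $\tfrac{\alpha}{2} R$ (hence in particular on a fixed fraction of $\alpha R$), and satisfies $|\partial^\gamma \varphi_R| \lesssim |I_R|^{-\gamma_1}|J_R|^{-\gamma_2}$ with absolute constants, since $\beta$ is fixed. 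By hypothesis (2), the sum $\Phi := \sum_{R \in \mathcal C} \varphi_R$ satisfies $\Phi \ge c > 0$ on $\Omega$ — indeed, every $(x,y) \in \Omega$ lies in some $\alpha R_0$, and I would want $\varphi_{R_0}(x,y)$ bounded below there; if $\beta \equiv 1$ only on $[-\alpha/2,\alpha/2]$ this needs a small fix (either enlarge the region where $\beta = 1$ up to just under $\alpha$, which is fine, or note $\Omega = \bigcup \alpha R$ forces a positive lower bound on $\Phi$ after shrinking). By hypothesis (3), $\Phi \le M_1$ everywhere. So $\Phi$ is a smooth function bounded above and below by absolute constants on $\Omega$.

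Next I would set $\psi_R := \varphi_R / \Phi$ on $\Omega$ (and $0$ off $\supp \varphi_R$); then $\sum_R \psi_R = \chi_\Omega$ by construction, and each $\psi_R$ is supported on $R$. The content is the adaptation estimate for $\psi_R$. Writing $\psi_R = \varphi_R \cdot \Phi^{-1}$ and using the Leibniz rule, I need derivative bounds on $\Phi^{-1}$ on the support of $\varphi_R$, i.e.\ on $R$. For this, note that on $R$ the only terms $\varphi_{R'}$ contributing to $\Phi$ are those with $R' \cap R \ne \emptyset$ (at most $M_1$ of them by (3)), and for each such $R'$ hypothesis (4) gives $|I_{R'}| \approx_{M_2} |I_R|$ and $|J_{R'}| \approx_{M_2} |J_R|$, hence $|\partial^\gamma \varphi_{R'}| \lesssim_{M_2} |I_R|^{-\gamma_1}|J_R|^{-\gamma_2}$ \emph{with respect to the side lengths of $R$}. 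Summing at most $M_1$ such terms, $\Phi$ obeys the same scaled derivative bounds on $R$; combined with the lower bound $\Phi \ge c$, an induction on $|\gamma|$ (or the Fa\`a di Bruno formula for $1/\Phi$) shows $|\partial^\gamma(\Phi^{-1})| \lesssim_{\alpha, M_1, M_2} |I_R|^{-\gamma_1}|J_R|^{-\gamma_2}$ on $R$. One more application of Leibniz to $\varphi_R \cdot \Phi^{-1}$ then yields $|\partial^\gamma \psi_R| \lesssim_{\alpha,M_1,M_2} |I_R|^{-\gamma_1}|J_R|^{-\gamma_2}\,\tilde\chi_R^M$ for every $M$ (the $\tilde\chi_R^M$ factor is automatic from compact support in $R$), which is exactly the claimed adaptation of each order $M$ with $C_M = O_{\alpha,M_1,M_2}(1)$.

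The main obstacle is bookkeeping in the last step: verifying that the derivative bounds on $\Phi$, and hence on $1/\Phi$, really are uniform and scale correctly, which is where hypotheses (3) and (4) must be used in tandem — (4) to ensure all nearby rectangles have comparable dimensions so their bumps scale like $R$'s bump, and (3) to ensure only boundedly many of them are summed. There is also the minor point, flagged above, of arranging that $\Phi$ is bounded \emph{below} on all of $\Omega$ using only (2); this is where one uses that the $\alpha R$ already cover $\Omega$, so one may safely take the plateau of $\beta$ to have radius slightly larger than $\alpha/2$ (say $\alpha$), keeping $\supp \beta \subset [-1/2,1/2]$, so that $(x,y) \in \alpha R_0 \implies \varphi_{R_0}(x,y) = 1$. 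Neither difficulty is serious; the argument is entirely soft.
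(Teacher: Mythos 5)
Your proposal is correct and follows essentially the same route as the paper: choose a bump $\eta_R$ with $\chi_{\alpha R}\le \eta_R\le \chi_R$ and set $\psi_R=\eta_R/\sum_{R'}\eta_{R'}$, with hypotheses (2), (3), (4) supplying the lower bound, upper bound, and scaled derivative bounds on the denominator. (The ``fix'' you flag is unnecessary: since $(x,y)\in\alpha R$ means $|x-c(I_R)|\le \alpha|I_R|/2$, your original plateau $[-\alpha/2,\alpha/2]$ already gives $\varphi_{R_0}\equiv 1$ on $\alpha R_0$.)
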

\begin{proof}
For each $R \in \mathcal C$, first choose some $\eta_R$ adapted and supported on $R$ such that $\chi_{\alpha R}\le \eta_R\le \chi_{R}$.
It is easy to check that
$$\psi_R=\frac{\eta_R}{\sum_{R'\in \mathcal C}\eta_{R'}}$$
satisfies the desired properties.

\end{proof}

We first decompose $P_{\operatorname{lac}}$ into triangles. Each triangle has one vertex at the origin and the other two vertices are two consecutive vertices of $P_{\operatorname{lac}}$. Note that while the line segment
$$\{(\xi,0) : -1\le \xi\le 1\}$$
is part of the polygon and not part of any of the triangles, this will not affect our argument since the line segment has two-dimensional measure zero.

Due to symmetry, it will suffice to focus on the part of $P_{\operatorname{lac}}$ lying in the second quadrant. For each integer $\mu\ge 1$, let $U_\mu$ be the triangle as above with vertices at the origin, at $v_\mu:=(\cos(\pi-\pi2^{-\mu}),\sin(\pi-\pi2^{-\mu}))$, and at $v_{\mu+1}$. Note that the slope $s_\mu$ of the line segment $l_\mu$ joining $v_\mu$ and $v_{\mu+1}$ satisfies $s_\mu\approx 2^{\mu}$.
For each $\mu\ge 1$, let $T_{\mu}$ be the trapezoidal region inside $U_\mu$ trapped between the dilate $(1-2^{-2\mu})P_{\operatorname{lac}}$ and $P_{\operatorname{lac}}$ itself.  (These are the shaded regions of  Figure \ref{trapezoidfigure}.)

\begin{figure}
\begin{tikzpicture}[scale=1.25]
\begin{scope}[cm={-1,0,0,1,(0,0)}]
\draw [->] (0,0) -- (4.5,0);
\draw [help lines] (4,0) arc (0:45:4);
\draw [thick] (45:4) -- (22.5:4) -- (11.25:4)--(5.625:4)--(2.8125:4)--(0:4);
\draw [help lines] (0,0) -- (45:4);
\draw [help lines] (0,0) -- (22.5:4);
\draw [help lines] (0,0) -- (11.25:4);
\draw [help lines] (0,0) -- (5.625:4);
\fill (4.25:3.75) circle (.4pt);
\fill (4.25:3.75) ++(-88:.08) circle (.4pt);
\fill (4.25:3.75) ++(-88:.16) circle (.4pt);
\fill (47:3) circle(.4pt);
\fill (47:3) ++(-.05,.05)  circle(.4pt);
\fill (47:3) ++(-.1,.1) circle(.4pt);
\draw [fill=gray!20] (45:3) -- (45:4) -- (22.5:4) -- (22.5:3) -- cycle;
\draw (33.75:3.4) node[font=\footnotesize]{$T_\mu$};
\draw [fill=gray!20] (22.5:3.75) -- (22.5:4) -- (11.25:4) -- (11.25:3.75) -- cycle;
\draw (16.35:3) node[font=\footnotesize]{$U_{\mu+1}$};
\draw [fill=gray!20] (11.25:3.9375) -- (11.25:4) -- (5.625:4) -- (5.625:3.9375) -- cycle;
\fill (45:4) circle(1.25pt);
\draw (45:4) node [font=\footnotesize, left] {$v_\mu$};
\fill (22.5:4) circle(1.25pt);
\draw (22.5:4) node [font=\footnotesize, left] {$v_{\mu+1}$};
\fill (11.25:4) circle(1.25pt);
\draw (11.25:4) node [font=\footnotesize, left] {$v_{\mu+2}$};
\fill (5.625:4) circle(1.25pt);
\draw (5.625:4) node [font=\footnotesize, left] {$v_{\mu+3}$};
\end{scope}
\end{tikzpicture}
\caption{Trapezoids $T_\mu$ in the second quadrant.} \label{trapezoidfigure}
\end{figure}
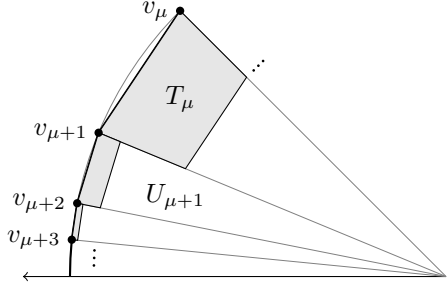

We will now cover each region $T_\mu$ by certain ``Whitney rectangles.''  To that end, let $\D_2$ be the collection of all cubes in $\R^2$ of side-length $2^{j}$ for some integer $j$ whose centers lie in the lattice $2^{j-10} \Z^2$.  Let ${\bf \tilde{S}}$ be the collection of all squares $S$ in $\D_2$ such that
\begin{equation}
\label{dcjhuierygivbrtrc3oput89t9u689sgdf5q32532-=}
C_0S\cap\{(\xi,\xi),\xi\in\R\}=\emptyset,
\end{equation}
$$4C_0S\cap\{(\xi,\xi),\xi\in\R\}\not=\emptyset.$$
Note that
\begin{equation}
\label{dcjhuierygivbrtrc3oput89t9u689}
(\R\times\R)\setminus \big\{(\xi,\xi),\xi\in\R\big\}=\bigcup_{S\in {\bf \tilde{S}}}\frac12 \, S.
\end{equation}

Now consider the linear transformation $L_\mu^2$ on $\R^2$ given by
$$L_\mu^2(\xi,\theta)=(-\xi,-s_\mu\theta).$$ Let $\alpha<1$ be sufficiently close to 1 ($\alpha=1-10^{-10}$ will probably be enough).

Let ${\bf \tilde{S}_\mu}$ be the collection of all the squares  in ${\bf \tilde{S}}$  such that the rectangle $\alpha R_{S,\mu}$ intersects $T_{\mu}$, where $$R_{S,\mu}:=v_\mu+L_\mu^2(S);$$ see Figure \ref{covertmufigure}.
Note that due to \eqref{dcjhuierygivbrtrc3oput89t9u689}, the rectangles $\alpha \, R_{S,\mu}$ with $S\in {\bf \tilde{S}_\mu}$ cover $T_{\mu}\setminus l_\mu$. Also, due to  \eqref{dcjhuierygivbrtrc3oput89t9u689sgdf5q32532-=}, if $C_0$ is sufficiently large then each $R_{S,\mu}$ will lie inside $P_{\operatorname{lac}}$.

\begin{figure}
\begin{tikzpicture}[scale=1.25]
\begin{scope}[cm={-1,0,0,1,(0,0)}]
\draw [help lines] (11.25:4) arc (11.25:33.75:4);
\draw [help lines] (0,0) -- (33.75:4);
\draw [help lines] (0,0) -- (11.25:4);
\draw [thick, fill=gray!10] (33.75:2.5) -- (33.75:4) -- (11.25:4) -- (11.25:2.5) -- cycle;
\fill (33.75:4) circle(1.25pt);
\draw (33.75:4) node [font=\footnotesize, left] {$v_\mu$};
\fill (11.25:4) circle(1.25pt);
\draw (11.25:4) node [font=\footnotesize, left] {$v_{\mu+1}$};
\begin{scope} [yshift=-.75mm, xshift=.45mm]
\draw (2.8,.3) -- (2.8,1.8);
\draw (2.8,1.8) -- (2.2,1.8);
\draw (2.95,.3) --(2.95,1.8);
\draw (2.5,.3) -- (2.5,2.3);
\draw (2.65,.3) -- (2.65,2.3);
\draw (2.65,2.3) -- (2.5,2.3);
\draw (2.65,2.05) -- (2.8,2.05) -- (2.8,1.8);
\draw (2.725,1.8) -- (2.725,2.05);
\draw (2.2,.3) -- (3.1,.3) -- (3.1,1.3);
\draw (3.1,.8) -- (2.2,.8);
\draw (2.2,.3) -- (2.2,2.3);
\draw (2.35,.3) -- (2.35,1.8);
\draw (3.1,1.3) -- (1.9,1.3);
\draw (3.1,.3) -- (3.4,.3) -- (3.4,.8) -- (3.1,.8);
\draw (3.25,.3) -- (3.25,1.3);
\draw (3.4,.8) -- (3.4,1.3) -- (3.1,1.3) -- (3.1,1.8) -- (2.8,1.8);
\draw (3.4,1.05) -- (3.1,1.05);
\draw (3.1,1.55) -- (2.8,1.55);
\draw (3.325,.8) -- (3.325,1.3);
\draw (3.175,.8) -- (3.175,1.3);
\draw (3.025,1.3) -- (3.025,1.8);
\draw (2.875,1.3) -- (2.875,1.8);
\draw (3.4,.8) -- (3.55,.8) -- (3.55,.3) -- (3.4,.3);
\draw (2.2,.3) -- (1.9,.3) -- (1.9,2.3) -- (2.2,2.3);
\end{scope}
\end{scope}
\end{tikzpicture}
\caption{Covering $T_\mu$ by ``Whitney rectangles.''} \label{covertmufigure}
\end{figure}
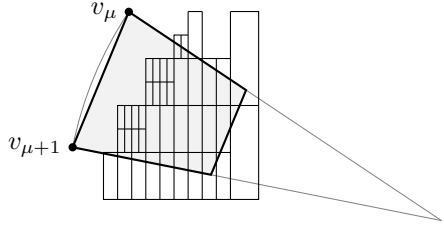

Define
\begin{align*}
{\bf \tilde{S}}^1 &=\bigcup_{\mu\ge 1}{\bf \tilde{S}_{\mu}},\\
{\bf \tilde{R}_\mu}&=\{R_{S,\mu}:S\in {\bf \tilde{S}_{\mu}}\},\\
{\bf \tilde{R}}^1 &=\bigcup_{\mu\ge 1}{\bf \tilde{R}_\mu},
\end{align*}
and for $i=2,3,4$ analogously define the collections ${\bf \tilde{S}}^i$ and ${\bf \tilde{R}}^i$ from the remaining quadrants. Set $${\bf \tilde{S}}=\bigcup_{i=1}^4{\bf \tilde{S}}^i \:\: ,\;\;\;\;\;\;{\bf \tilde{R}}=\bigcup_{i=1}^4{\bf \tilde{R}}^i.$$

For $\mu\ge 1$, define the intervals
\begin{align*}
J_\mu^1&:=\bigcup_{R=J_R^1\times J_R^2\in {\bf \tilde{R}_\mu}}J_R^1\:,\\
J_\mu^2&:=\bigcup_{R=J_R^1\times J_R^2\in {\bf \tilde{R}_\mu}}J_R^2\:,\\
J_\mu^3&:=\bigcup_{R=J_R^1\times J_R^2\in {\bf \tilde{R}_\mu}}(-J_R^1-J_R^2)\;,
\end{align*}
and the intervals $I_\mu^i :=\frac1\alpha \, J_\mu^i$. Observe that, crucially, if $\alpha$ is small enough then for each $i$ the intervals $(I_{\mu}^i)$ have bounded overlap.

Define
${\bf \tilde{R}}_1'$ to consist of all the rectangles
$$R_\mu:=[-(1-2^{-2\mu})\cos(\pi 2^{-\mu-1}),-(1-2^{-2\mu+2})\cos(\pi 2^{-\mu})]\times[0,(1-2^{-2\mu})\sin(\pi 2^{-\mu})]$$
for $\mu\ge 2$; see Figure \ref{paraproductfigure}.

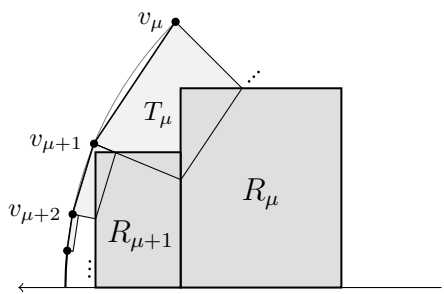
\begin{figure}
\begin{tikzpicture}[scale=1.25]
\begin{scope}[cm={-1,0,0,1,(0,0)}]
\draw [->] (0,0) -- (4.5,0);
\draw [help lines] (4,0) arc (0:45:4);
\draw [thick] (45:4) -- (22.5:4) -- (11.25:4)--(5.625:4)--(2.8125:4)--(0:4);
\fill (4.25:3.75) circle (.4pt);
\fill (4.25:3.75) ++(-88:.08) circle (.4pt);
\fill (4.25:3.75) ++(-88:.16) circle (.4pt);
\fill (47:3) circle(.4pt);
\fill (47:3) ++(-.05,.05)  circle(.4pt);
\fill (47:3) ++(-.1,.1) circle(.4pt);
\draw [fill=gray!10] (45:3) -- (45:4) -- (22.5:4) -- (22.5:3) -- cycle;
\draw (33.75:3.4) ++(.18,-.05) node[font=\footnotesize]{$T_\mu$};
\draw [fill=gray!10] (22.5:3.75) -- (22.5:4) -- (11.25:4) -- (11.25:3.75) -- cycle;
\draw [fill=gray!10] (11.25:3.9375) -- (11.25:4) -- (5.625:4) -- (5.625:3.9375) -- cycle;
\fill (45:4) circle(1.25pt);
\draw (45:4) node [font=\footnotesize, left] {$v_\mu$};
\fill (22.5:4) circle(1.25pt);
\draw (22.5:4) node [font=\footnotesize, left] {$v_{\mu+1}$};
\fill (11.25:4) circle(1.25pt);
\draw (11.25:4) node [font=\footnotesize, left] {$v_{\mu+2}$};
\fill (5.625:4) circle(1.25pt);
\draw [fill=gray!25, line width=.75pt] (1.065,0) -- (1.065,2.121) -- (2.775,2.121) -- (2.775,0) -- cycle;
\draw [fill=gray!25, line width=.75pt] (2.775,0) -- (2.775,1.44) -- (3.68,1.44) -- (3.68,0) -- cycle;
\draw (45:3) -- (22.5:3) -- (22.5:4);
\draw (22.5:3.75) -- (11.25:3.75);
\draw (1.92,1) node {$R_\mu$};
\draw (3.2,.55) node {$R_{\mu+1}$};
\end{scope}
\end{tikzpicture}
\caption{``Paraproduct'' rectangles $R_\mu$ in the second quadrant.} \label{paraproductfigure}
\end{figure}

Analogously, define the collections ${\bf \tilde{R}}_i'$, $i=2,3,4$, from the remaining quadrants. Let $${\bf \tilde{R}'}=\bigcup_{i=1}^4{\bf \tilde{R}}_i'\:\:,\;\;\;\;\;\;{\bf \tilde{R}''}=\left\{ \frac1\alpha R \: : \: R \in \mathbf{\tilde R}'\right\}.$$

We leave to the reader the verification of the following key geometric fact.
\begin{proposition}
The collection $\mathcal C$ consisting of the square
$$S_0=\left[-\frac{\sqrt{2}}{2}\;,\; \frac{\sqrt{2}}{2}\right]\times \left[-\frac{\sqrt{2}}{2}\; ,\; \frac{\sqrt{2}}{2}\right]$$
 together with all the rectangles in ${\bf \tilde{R}}$ and ${\bf \tilde{R}''}$ satisfies the hypotheses of Proposition \ref{partitionunity}, with $\Omega$ being the interior of $P_{\operatorname{lac}}$, the $\alpha$ chosen before, and some sufficiently large $M_1,M_2$.
\end{proposition}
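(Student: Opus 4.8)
The plan is to check the four hypotheses of Proposition~\ref{partitionunity} in turn, taking $\Omega$ to be the interior of $P_{\operatorname{lac}}$, the $\alpha$ fixed above (chosen sufficiently close to $1$), and $M_1,M_2$ large. Everything is done quadrant by quadrant, and the organizing observation is that, after deleting the measure-zero set consisting of the coordinate axes, the radial segments $[0,v_\mu]$, and the polygon edges $l_\mu$, the region $\Omega$ is partitioned into the outer trapezoids $T_\mu$ and the inner triangles $U_\mu\cap(1-2^{-2\mu})P_{\operatorname{lac}}$. Condition $(1)$ is a containment statement. For $R_{S,\mu}=v_\mu+L_\mu^2(S)\in{\bf\tilde R}$ it is exactly the containment already asserted in the text: \eqref{dcjhuierygivbrtrc3oput89t9u689sgdf5q32532-=} keeps $C_0S$ away from the diagonal, while the affine map $v_\mu+L_\mu^2(\cdot)$ carries the diagonal onto the line through $v_\mu$ of slope $s_\mu$, namely the line containing $l_\mu$; so for $C_0$ large each $R_{S,\mu}$ sits strictly on the interior side of $l_\mu$. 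For $\tfrac1\alpha R_\mu\in{\bf\tilde R}''$, the rectangle $R_\mu$ lies well inside $U_\mu\cap(1-2^{-2\mu})P_{\operatorname{lac}}$ with room to spare, so for $\alpha$ near $1$ its $\tfrac1\alpha$-dilate is still interior. Finally $S_0$ is the inscribed axis-parallel square, which lies in the closure of $P_{\operatorname{lac}}$ with only its four corners (which are polygon vertices) on the boundary; this touches the boundary on a null set and does not affect the partition-of-unity conclusion.

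Conditions $(3)$ and $(4)$ amount to assembling a few bounded-overlap and comparable-scale facts. Within a fixed $\mu$, the $R_{S,\mu}$ inherit from the sparse Whitney family ${\bf\tilde S}$, through the fixed affine map $v_\mu+L_\mu^2(\cdot)$, both bounded overlap and the property that mutually intersecting members have comparable side-lengths. For rectangles attached to distinct parameters $\mu\neq\mu'$, an intersection forces both of them to lie near the common radial seam or near $(1-2^{-2\mu})l_\mu$, whereupon the lacunarity $s_\mu\approx2^\mu$ bounds $|\mu-\mu'|$ and renders the scales comparable; the paraproduct rectangles $R_\nu$ overlap only for consecutive indices, which is precisely the bounded overlap of the intervals $I_\mu^i$ already recorded in the text, and only the boundedly many small-index $R_\nu$ (those of size $\approx1$) can meet $S_0$. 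Putting these together yields $(3)$ and $(4)$, the overlap count fixing $M_1$ and the scale ratios fixing $M_2$.

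Condition $(2)$, that the $\alpha$-dilates cover $\Omega$, is the main point, and it separates into two pieces. The outer trapezoids are handled by pushing the Whitney covering identity \eqref{dcjhuierygivbrtrc3oput89t9u689} forward through $v_\mu+L_\mu^2(\cdot)$: this shows that the rectangles $\tfrac12R_{S,\mu}$ — a fortiori the $\alpha R_{S,\mu}$ — with $S\in{\bf\tilde S_\mu}$ cover $T_\mu\setminus l_\mu$, which is the covering already asserted in the text, $l_\mu$ being negligible. The delicate piece is the inner triangles: one must verify that $\bigcup_\nu R_\nu$ together with $\alpha S_0$ exhausts $\bigcup_\mu\bigl(U_\mu\cap(1-2^{-2\mu})P_{\operatorname{lac}}\bigr)$. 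This is a telescoping argument — the $R_\nu$ form a chain of nested rectangles marching from the polygon edge in toward the origin, each sharing its inner endpoint $(1-2^{-2\nu+2})\cos(\pi2^{-\nu})$ with the outer endpoint of $R_{\nu-1}$, the chain ultimately overlapping $\alpha S_0$ near the center — and I expect this to be the main obstacle. The two things to pin down are: that no thin sliver is lost at a handoff (which is what dictates the precise endpoints of the $R_\nu$ and where the factor $\alpha$ supplies the slack), and that the central part of each $U_\mu$ near the four ``poles'' $(\pm1,0),(0,\pm1)$ of $P_{\operatorname{lac}}$ is genuinely covered. Since $S_0$ is the \emph{maximal} inscribed axis-parallel square and reaches the unit circle only at the four vertices of angle $\tfrac\pi4+\tfrac{k\pi}{2}$, near $(0,\pm1)$ the burden falls entirely on the innermost paraproduct rectangles; checking that they reach far enough in to overlap $\alpha S_0$ is what fixes the lowest index occurring in ${\bf\tilde R}_1'$, and this appears to require including the rectangles $R_\nu$ down to $\nu=1$. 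Once these seam checks go through, the compatible choices ``$\alpha$ close to $1$'' and ``$C_0$ large'' finish the verification.
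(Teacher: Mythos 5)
The paper gives no proof of this proposition---it is explicitly left to the reader---so your sketch stands on its own, and its architecture is correct: conditions (1), (3), (4) are soft consequences of the Whitney structure of ${\bf \tilde{S}}$, the lacunarity $s_\mu\approx 2^\mu$, and the choices of $C_0$ large and $\alpha$ near $1$, while the covering condition (2) splits exactly as you say into the trapezoids (push \eqref{dcjhuierygivbrtrc3oput89t9u689} forward through $v_\mu+L_\mu^2(\cdot)$) and the inner triangles (the abutting chain of paraproduct rectangles plus $S_0$). Two of your loose ends can be tightened on the spot. The inner-triangle ``telescoping'' you defer is actually a one-line check: the right endpoint of the $\xi$-interval of $R_{\nu+1}$ equals the left endpoint of that of $R_\nu$, and for every $\mu\ge\nu$ the inner triangle $U_\mu\cap(1-2^{-2\mu})P_{\operatorname{lac}}$ has height at most $(1-2^{-2\mu})\sin(\pi 2^{-\mu})\le(1-2^{-2\nu})\sin(\pi 2^{-\nu})$, which is the height of $R_\nu$; since the $\alpha$-dilates of the members of ${\bf \tilde{R}''}$ are the closed rectangles $R_\nu$ themselves, no sliver is lost at the handoffs and no slack from $\alpha$ is needed there. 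Conversely, the genuinely soft spot in your treatment of (3)--(4) is the interaction of the Whitney families attached to \emph{adjacent} edges near a shared vertex $v_{\mu+1}$: ``lacunarity bounds $|\mu-\mu'|$'' is right, but ``renders the scales comparable'' for two intersecting rectangles, one reaching $T_\mu$ and one reaching $T_{\mu+1}$, is precisely the assertion that needs an argument (their sizes are pinned, up to $C_0$, to their distances from the two nearly parallel edge lines, and one must rule out a large ratio near the corner).

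Your tentative remark about the lowest index is the substantive discovery here, and you should assert it rather than hedge: with ${\bf \tilde{R}}_1'$ indexed by $\mu\ge 2$ as the paper writes it, condition (2) fails. The rectangles $R_\nu$, $\nu\ge2$, only reach abscissae $\xi\le-\frac34\cos\frac{\pi}{4}$; $\alpha S_0$ only reaches ordinates $\eta\le\alpha\frac{\sqrt2}{2}$; and no rectangle of ${\bf \tilde{R}_1}$ penetrates past $(1-2^{-2})P_{\operatorname{lac}}$, since its $\alpha$-dilate must meet $T_1$ while its diameter is $O(C_0^{-1})$ times its distance to $l_1$. Hence the nonempty open set $U_1\cap\frac34 P_{\operatorname{lac}}\cap\{\eta>\frac{\sqrt2}{2}\}$---which contains, for instance, $\left(-\frac1{20},\frac{18}{25}\right)$---meets no $\alpha R$ with $R\in\mathcal C$, and the same occurs near $(0,-1)$ and in the reflected families. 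Adjoining the $\mu=1$ instance of the same formula, $R_1=\left[-\frac34\cos\frac{\pi}{4},\,0\right]\times\left[0,\frac34\right]$, to each ${\bf \tilde{R}}_i'$ repairs this (its left endpoint matches the right endpoint of $R_2$ and its height $\frac34$ dominates that of the inner triangle of $U_1$) without disturbing (1), (3), or (4). In short: the index range ``$\mu\ge2$'' in the definition of ${\bf \tilde{R}}_1'$ should be ``$\mu\ge1$,'' and your verification should state this as a fact, not a suspicion.
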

Thus, there exist functions $\psi_R$ adapted to and supported on rectangles $R$ such that
$$\chi_{P_{\operatorname{lac}}}=\psi_{S_0}+\sum_{R\in {\bf \tilde{R}}}\psi_{R}+\sum_{R\in {\bf \tilde{R}''}}\psi_{R}.$$

The bilinear Fourier multiplier with symbol $\psi_{S_0}$  trivially maps $L^{p_1}\times L^{p_2}$ to $L^{p_3'}$ for each $1\le p_1,p_2\le \infty$. The multiplier corresponding to the sum over $R\in {\bf \tilde{R}''}$ is essentially a ``quadratically scaled paraproduct'' and is bounded using Theorem 2.2 in \cite{Li}; we include the proof of a simplified case in the appendix to this paper.

We thus focus on the main term $\sum_{R\in {\bf \tilde{R}}}\psi_{R}$.  Recall the collection  $\tilde \Q$ of Whitney cubes introduced earlier.  For $\mu\ge 1$ define the linear transformation $L_\mu$ on $\R^3$ given by $$L_\mu(\xi,\eta,\theta)=(-\xi,-s_\mu\eta,(1+s_\mu)\theta),$$ let $$\Q_\mu=\{L_\mu(\tilde Q):\tilde Q\in \tilde \Q\},$$ and let $R\in {\bf \tilde{R}}$. We assume without any loss that $R$ lies in the second quadrant, more precisely that $R\in {\bf \tilde{R}_\mu}$ for some $\mu\ge 1$.

Let $v_\mu=(\xi_\mu,\eta_\mu)$, and define $\theta_\mu=-\xi_\mu-\eta_\mu$.

Let $S=I\times J$ be the  square in ${\bf\tilde{S}_\mu}$ such that $R=L_\mu^2(S)+v_\mu$. Then $L_\mu^2(S)=(-I)\times (-s_\mu\cdot J).$ If $(\xi,\eta)\in L_\mu^2(S)$ then $\theta=-\xi-\eta\in K_R:=I+s_\mu\cdot J$.
Let $\psi_{K_R}$ be adapted to $\frac1\alpha K_R$ and satisfy
$$\chi_{K_R}\le \psi_{K_R}\le \chi_{\frac1\alpha K_R}.$$

Let $\xi_0\in 4C_0I\cap 4C_0J$. It is easy to check that
$$\frac1{\alpha}K_R\subset \bigcup_{\substack{K\in D^1, \\ \xi_0\in 10 \, C_0 \,K}}{\alpha}\big((1+s_\mu)\cdot K\big).$$
Also, note that for each $K$ as above we have $I\times J\times K\in \tilde\Q$.

Using these last two observations we can write
$$\psi_{K_R}=\sum_{\omega_3:(R-v_\mu)\times \omega_3\in {\bf Q_\mu}}m_{\omega_3}$$
for some multipliers $m_{\omega_3}$ adapted to and supported on the intervals $\omega_3$.

We conclude that for each $f^i$
\begin{align*}
&\int_{\xi+\eta+\theta=0}\hat{f^1}(\xi)\hat{f^2}(\eta)\hat{f^3}(\theta)\sum_{R\in {\bf \tilde{R}_\mu}}\psi_{R}(\xi,\eta)d\xi d\eta d\theta\\
=&\int_{\xi+\eta+\theta=0}\hat{f^1_\mu}(\xi)\hat{f^2_\mu}(\eta)\hat{f_\mu^3}(\theta)\sum_{R\in {\bf \tilde{R}_\mu}}\psi_{R}(\xi,\eta)\psi_{K_R}(\theta-\theta_\mu)d\xi d\eta d\theta\\
=&\sum_{Q=\omega_1\times \omega_2\times \omega_3\in {\bf Q_\mu}}\int\pi_{\omega_1}(M_{\xi_\mu}f_{\mu}^1)\pi_{\omega_2}(M_{\eta_\mu}f_{\mu}^2)\pi_{\omega_3}(M_{\theta_\mu}f_{\mu}^3),
\end{align*}
where as before $\pi_\omega$ is a frequency multiplier adapted to and supported on the interval $\omega$.  Discretizing in space at each scale via the partitions of unity (\ref{spatialdiscret}) and summing in $\mu \geq 1$, we arrive at a model sum that can be bounded by Theorem \ref{main1}; by duality, this completes the proof of Theorem \ref{main3}.

\section{Appendix: Quadratic paraproducts}

We outline a simple proof of a version of Theorem 2.1 in \cite{Li}. This is meant to help the reader understand the main ideas in a simplified setting.

\label{para}
Let $$Q_k(f)=(\hat{f}1_{[-2^{-k},-2^{-k-1}]\cup[2^{-k-1},2^{-k}]})\check {\;}$$
$$P_k(f)=(\hat{f}1_{[-2^{-k},2^{-k}]})\check {\;}$$
\begin{theorem}
For each  $1<p_1,p_2,p_3<\infty$ satisfying
$$\frac1{p_1}+\frac1{p_2}+\frac1{p_3}=1$$
we have
$$\Big\|PP(f,g):=\sum_{k}Q_{2k}(f)P_k(g)\Big\|_{p_3'}\lesssim\|f\|_{p_1}\|g\|_{p_2}.$$
\end{theorem}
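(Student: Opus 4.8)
The plan is to peel $PP$ apart into a bounded number of \emph{ordinary} bilinear paraproducts. A direct appeal to the Coifman--Meyer multiplier theorem is not available: the symbol of $PP$ is, for each $k$, supported in $\{|\xi|\sim 2^{-2k},\ |\eta|\lesssim 2^{-k}\}$, where it is smooth in $\eta$ only at the coarse scale $2^{-k}$, so that $|\partial_\eta^b\sigma|\sim 2^{kb}\gg(|\xi|+|\eta|)^{-b}$ on the relevant set. It is exactly this quadratic mismatch of scales that must be removed.

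First I would expand the low-pass factor, $P_k(g)=\sum_{m\ge k}Q_m(g)$, so that $PP(f,g)=\sum_{k}\sum_{m\ge k}Q_{2k}(f)\,Q_m(g)$, and split this double sum according to which block carries the higher frequency. In the region $m\ge 2k$, resumming in $m$ for fixed $k$ recollapses the inner sum into a single low-pass,
$$\sum_{k}\ \sum_{m\ge\max(k,2k)}Q_{2k}(f)\,Q_m(g)\ =\ \sum_{k\le 0}Q_{2k}(f)\,P_k(g)\ +\ \sum_{k\ge 1}Q_{2k}(f)\,P_{2k}(g);$$
each of these is a textbook paraproduct, with the blocks $Q_{2k}(f)$ frequency-lacunary in $k$ and the accompanying $g$-factor a low-pass at a scale no finer than the $Q_{2k}(f)$-annulus. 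The complementary region is $k\le m\le 2k-1$, which forces $k\ge 1$; here I would instead resum in $k$ for fixed $m$, obtaining
$$\sum_{k\ge 1}\ \sum_{m=k}^{2k-1}Q_{2k}(f)\,Q_m(g)\ =\ \sum_{m\ge 1}Q_m(g)\,G_m(f),\qquad G_m(f):=\sum_{\lceil(m+1)/2\rceil\le k\le m}Q_{2k}(f).$$
Every block occurring in $G_m(f)$ has frequency $\lesssim 2^{-m}$, so this too has the shape of an ordinary paraproduct, now with $Q_m(g)$ as the lacunary block and $G_m(f)$ as the low factor.

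It remains to bound each of these three sums by the standard (Coifman--Meyer) paraproduct argument. The frequency-lacunarity of the ``high'' blocks --- up to a routine splitting-off of boundedly many near-diagonal terms, disposed of by pointwise Cauchy--Schwarz and Littlewood--Paley --- gives
$$\Big\|\sum_n(\mathrm{high})_n\,(\mathrm{low})_n\Big\|_{p_3'}\ \lesssim\ \Big\|\Big(\sum_n|(\mathrm{high})_n|^2\Big)^{1/2}\ \sup_n|(\mathrm{low})_n|\ \Big\|_{p_3'};$$
H\"older with $\tfrac1{p_1}+\tfrac1{p_2}=\tfrac1{p_3'}$ (our hypothesis, and $p_3'>1$ since $p_3<\infty$), the Littlewood--Paley inequality (on $L^{p_1}$ for the first two sums, on $L^{p_2}$ for the third), and an $L^p$ bound for the maximal low factor then finish the estimate. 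For the first two sums the low factor is a genuine dyadic Fourier low-pass, whose maximal function is bounded on $L^{p_2}$; for the third, $G_m(f)$ is \emph{not} a single low-pass but an aggregate of $\sim m$ Littlewood--Paley blocks, and I expect verifying that this aggregation costs nothing to be the only point requiring real care. The resolution is that the paraproduct estimate uses only that $G_m(f)$ is Fourier-supported below $2^{-m}$ and that $\sup_m|G_m(f)|$ is bounded on $L^{p_1}$, and the latter holds because $\sup_m|G_m(f)|$ is controlled by the maximal partial sums of the lacunary Littlewood--Paley family $\{Q_{2k}\}_k$ --- equivalently, running the whole argument with a \emph{smooth} Littlewood--Paley decomposition from the outset, $\sup_m|G_m(f)|\lesssim\mathcal M_1 f$ pointwise. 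Summing the finitely many bounded contributions yields the theorem.
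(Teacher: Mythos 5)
Your proof is correct in substance but organizes the argument differently from the paper. The paper dualizes against $h$, inserts $P_{k-1}(h)$ by frequency-support considerations, and telescopes both low-pass factors via $P_k=\mathrm{Id}-\sum_{l<k}Q_l$, producing six trilinear terms, each estimated by H\"older with two square functions and the maximal martingale transform $\sup_l\big|\sum_{k>l}a_kQ_k(\psi)\big|$. You instead expand $P_k(g)=\sum_{m\ge k}Q_m(g)$ and re-index the double sum into three single-parameter paraproducts (two with $Q_{2k}(f)$ as the lacunary block, one with $Q_m(g)$ lacunary and the aggregate $G_m(f)$ as the low factor), then run the standard paraproduct machine with a reverse Littlewood--Paley inequality on the output annuli. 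The two routes are close cousins --- your re-summation in $k$ for fixed $m$ is exactly the paper's interchange $\sum_k\sum_{l<k}\to\sum_l\sum_{k>l}$, and both ultimately rest on lacunary Littlewood--Paley bounds plus an $L^p$ estimate for maximal lacunary partial sums --- but yours stays at the bilinear level and makes the ``three paraproducts'' structure explicit, at the cost of invoking reverse square-function estimates, which the paper's trilinear telescoping avoids.

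One remark in your treatment of the third sum is wrong, though not fatally so: the claim that, with a smooth Littlewood--Paley decomposition, one has $\sup_m|G_m(f)|\lesssim\mathcal M_1f$ pointwise. The blocks entering $G_m$ live at the quadratically spaced scales $2^{-2k}$, so a partial sum of them over a range of $k$ does not telescope into a difference of two low-pass operators (the symbol vanishes on the gaps between consecutive annuli $|\xi|\sim2^{-2k}$), and no pointwise domination by the maximal function holds. Your other justification is the correct one: $G_m(f)$ is a difference of two tails of the form $\sum_{k\ge l}Q_{2k}(f)$, so $\sup_m|G_m(f)|$ is controlled by the maximal lacunary partial-sum operator, i.e., precisely the maximal martingale transform bound (with $a_k=\chi_{\{k\ \text{even}\}}$) that the paper quotes. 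The same caveat applies, more mildly, to $\sup_k|P_k(g)|$ in your first two sums: since $P_k$ is a sharp cutoff, its maximal function is again the lacunary maximal partial-sum operator rather than something dominated by $\mathcal M_1g$; it is indeed bounded on $L^{p_2}$, but by that same nontrivial estimate rather than by a routine smooth-low-pass argument.
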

\begin{proof}
We use the classical telescoping argument. Note that due to considerations of frequency support we have
\begin{align*}
\langle PP(f,g),h\rangle &= \int\sum_kQ_{2k}(f)P_k(g)P_{k-1}(h)\\
&=\int \sum_kQ_{2k}(f)\Big(g-\sum_{l<k}Q_l(g)\Big)\Big(h-\sum_{s<k-1}Q_s(h)\Big)\\
&=\int \sum_kQ_{2k}(f)gh - \int \sum_kQ_{2k}(f)\Big(\sum_{l<k}Q_l(g)\Big)h - \int \sum_kQ_{2k}(f)\Big(\sum_{s<k-1}Q_s(h)\Big)g\\
&\phantom{=\sum_kQ_{2k}(f)gh}+\int \sum_{l}Q_l(g)Q_l(h)\sum_{k>l}Q_{2k}(f) + \int \sum_{l}Q_{l}(g)Q_{l-1}(h)\sum_{k>l}Q_{2k}(f)\\
&\phantom{=\sum_kQ_{2k}(f)gh}+\int \sum_{l}Q_{l}(g)Q_{l+1}(h)\sum_{k>l+1}Q_{2k}(f).
\end{align*}
To ``diagonalize'' the sum above we used the frequency support of each $Q_k$. Finally, use H\"older, the boundedness of the square function $$S(\psi)=\Big(\sum_k|Q_k(\psi)|^2\Big)^{1/2}$$ and of the maximal martingale transform
$$\Big\|\sup_{l}\Big|\sum_{k>l}a_k \, Q_k(\psi)\Big| \, \Big\|_p\lesssim \|a_k\|_{\infty}\|\psi\|_p.$$
For example,

\begin{align*}
& \; \Big|\sum_k Q_{2k}(f) \, \Big(\sum_{l<k}Q_l(g)\Big) \, h\Big|\\
=& \; \Big|h\sum_l Q_l(g)\sum_{k>l}Q_{2k}(f)\Big|\\
=& \; \Big|\sum_l Q_l(g)\sum_{k>l}Q_{2k}(f) \, \big(Q_{l-1}h+Q_l(h)+Q_{l+1}(h)\big)\Big|\\
\leq & \; 3 \, S(f) \, S(h)\, \sup_{l}\Big|\sum_{k>l}Q_{2k}(g)\Big|.
\end{align*}
\end{proof}

\end{document}